\numberwithin{equation}{section}
\newcommand{\adj}[2]{\psi_{#1}^{#2}}
\newcommand{\adjfunc}[1]{\Psi_{#1}}
\newcommand{\alg}{\mathcal{B}}
\newcommand{\ba}{\bar{\alpha}}
\newcommand{\bb}[1]{\mathbb{W}^{(#1)}}
\newcommand{\condexp}{;}
\newcommand{\ConvP}{\stackrel{\mathbb{P}}{\longrightarrow}}
\newcommand{\cset}{\mathsf{C}}
\newcommand{\csetnew}{\tilde{\mathsf{C}}}
\newcommand{\dd}{\mathrm d}
\newcommand{\ee}{\mathbb E}
\newcommand{\eg}{e.g.}
\newcommand{\eqsp}{\;}
\newcommand{\define}{\ensuremath{\stackrel{\mathrm{def}}{=}}}
\newcommand{\dtheta}{d_\theta}
\newcommand{\dX}{d_X}
\newcommand{\GPEAPS}{GPEPS}
\newcommand{\hd}{q}
\newcommand{\hk}{Q}
\newcommand{\ie}{i.e.}
\newcommand{\IMQ}{\mathcal Q }
\newcommand{\idx}[2]{I_{#1}^{#2}}
\newcommand{\init}{\chi}
\newcommand{\inverse}[1]{{#1}^{\leftarrow}}
\newcommand{\instr}[1]{\pi_{#1}^N}
\newcommand{\lemkern}{S}
\newcommand{\lk}{g}
\newcommand{\lkhd}[1]{\mathrm{L}_{#1}}
\newcommand{\llkhd}[1]{\ell_{#1}}
\newcommand{\logqdraw}[3]{\bar{V}_{#3}^{#2}(#1)}
\newcommand{\Lower}[1]{\tilde{W}_{#1}^-}
\newcommand{\Lp}[1]{\mathsf{L}^{#1}}
\newcommand{\lqkern}{\bar{P}}
\newcommand{\modmeas}[1]{\sm{#1}\langle \adjfunc{k} \rangle}
\newcommand{\myleft}{}
\newcommand{\myright}{}
\newcommand{\N}{\mathbb{N}}
\newcommand{\nbr}[2]{M_{#1}^{#2}}
\newcommand{\normpdf}{\mathcal{N}}
\newcommand{\overpil}[1]
{\raisebox{1.5ex}{ $\underrightarrow{\text{\ \scriptsize #1\ }}$ }}
\newcommand{\parti}[2]{\ifthenelse{\equal{#1}{}}{\xi_{N,#2}}{\xi_{#1}^{#2}}}
\newcommand{\partinew}[2]{\ifthenelse{\equal{#1}{}}{\tilde{\xi}_{N,#2}}{\tilde{\xi}_{#1}^{#2}}}
\newcommand{\partmixture}[1]{\bar{\phi}^N_{#1}}
\newcommand{\path}{\tilde{W}}
\newcommand{\partalg}{\mathcal{F}}
\newcommand{\partalgp}{\mathcal{F}}
\newcommand{\partalgtd}{\bar{\mathcal{F}}}
\newcommand{\partdraw}[1]{X_{#1}}
\newcommand{\partsm}[1]{\phi_{#1}^N}
\newcommand{\partspace}{\boldsymbol{\Xi}}
\newcommand{\partspacenew}{\tilde{\boldsymbol{\Xi}}}
\newcommand{\pp}{\mathbb P}
\newcommand{\proposal}[1]{R_{#1}}
\newcommand{\propdens}[1]{r_{#1}}
\newcommand{\qdraw}[3]{V_{#3}^{#2}(#1)}
\newcommand{\qdrawnew}[3]{V_{#3}^{#2}(#1)}
\newcommand{\qest}[2]{q^{#1}_{#2}}
\newcommand{\qkern}{P}
\newcommand{\revM}[1]{\overleftarrow{\hk}_{#1}}
\newcommand{\rr}{\mathbb R}
\newcommand{\selpart}[2]{\bar{\xi}_{#1}^{#2}}
\newcommand{\sm}[1]{\phi_{#1}}
\renewcommand{\ss}[1]{\mathbb{S}^{(#1)}}
\newcommand{\Unif}{\mathrm{Unif}}
\newcommand{\U}[1]{U_{N,#1}}
\newcommand{\Up}[1]{U_{N,#1}}
\newcommand{\Upper}[1]{\tilde{W}_{#1}^+}
\newcommand{\Utd}[1]{\bar{U}_{N,#1}}
\newcommand{\ud}{\mathrm{d}}
\newcommand{\uk}[1]{L_{#1}}
\newcommand{\ukvar}{L}
\newcommand{\Xnew}[1]{\tilde{X}_{#1}}
\newcommand{\Xalg}{\mathcal{X}}
\newcommand{\Xset}{\mathsf{X}}
\newcommand{\Yalg}{\mathcal{Y}}
\newcommand{\Yset}{\mathsf{Y}}
\newcommand{\wgt}[2]{\ifthenelse{\equal{#1}{}}{\omega_{N,#2}}{\omega_{#1}^{#2}}}
\newcommand{\wgtfunc}[1]{\Phi_{#1}}
\newcommand{\wgtfuncest}[2]{\Phi_{#1}^{#2}}
\newcommand{\wgtnew}[2]{\ifthenelse{\equal{#1}{}}{\tilde{\omega}_{N,#2}}{\tilde{\omega}_{#1}^{#2}}}
\newcommand{\wgtsum}[1]{\ifthenelse{\equal{#1}{}}{\Omega_N}{\Omega_{#1}^N}}
\newcommand{\ww}[1]{\mathbb{W}^{(#1)}}
\newcounter{hyp}
\newenvironment{hyp}[1]{\refstepcounter{hyp}\it\begin{itemize}\item[{\it
      (A\arabic{hyp})}] \label{hyp:#1}}{\end{itemize}}
\newcommand{\refhyp}[1]{{\it (A\ref{hyp:#1})}}
\newcommand{\refhyps}[2]{{\it (A\ref{hyp:#1}--\ref{hyp:#2})}}
\newtheorem{definition}{Definition}[section]
\newtheorem{lemma}{Lemma}[section]
\newtheorem{proposition}{Proposition}[section]
\newtheorem{theorem}{Theorem}[section]
\begin{document}

\begin{frontmatter}

    \title{Particle-based likelihood inference in partially observed diffusion processes using generalised Poisson estimators}
    \runtitle{Particle-based inference in partially observed diffusions}

    \begin{aug}

        \author{\fnms{Jimmy} \snm{Olsson}
        \ead[label=e1]{jimmy@maths.lth.se}}
        \and
        \author{\fnms{Jonas} \snm{Str\"ojby}\corref{t1}
        \ead[label=e2]{strojby@maths.lth.se}}

        \address{Center of Mathematical Sciences\\Lund University\\Lund, Sweden\\
        \printead{e1,e2}}

        \runauthor{J.~Olsson and J.~Str\"ojby}

    \end{aug}

    \begin{abstract}
        This paper concerns the use of the expectation-maximisation (EM) algorithm for inference in partially observed diffusion processes. In this context, a well known problem is that all except a few diffusion processes lack closed-form expressions of the transition densities. Thus, in order to estimate efficiently the EM intermediate quantity we construct, using novel techniques for \emph{unbiased} estimation of diffusion transition densities, a random weight fixed-lag auxiliary particle smoother, which avoids the well known problem of particle trajectory degeneracy in the smoothing mode. The estimator is justified theoretically and demonstrated on a simulated example.
    \end{abstract}
    
    \begin{keyword}[class=AMS]
    \kwd[Primary ]{62M09}
    \kwd[; secondary ]{65C05}
    \end{keyword}

    \begin{keyword}
    \kwd{auxiliary particle filter}
    \kwd{EM algorithm}
    \kwd{exact algorithm}
    \kwd{generalised Poisson estimator}
    \kwd{partially observed diffusion process}
    \kwd{sequential Monte Carlo}
    \end{keyword}

    \tableofcontents

\end{frontmatter}

\section{Introduction}
\label{section:introduction}
In this paper we discuss the use of \emph{sequential Monte Carlo} (SMC) methods (alternatively termed \emph{particle methods}) for likelihood-based inference in \emph{partially observed diffusions} (PODs). The proposed method relies on a novel approach for estimating transition densities of diffusion processes via so-called \emph{generalised poisson estimators} (GPEs). For the models under consideration, the likelihood function of the observed data cannot be expressed on closed-form; however, since partially observed diffusion models are, like more general latent variable models, specified using conditional dependence relations, this inference problem can be efficiently cast into the framework of the \emph{expectation-maximisation} (EM) \emph{algorithm} proposed by \cite{dempster:laird:rubin:1977}.
When applying the EM algorithm in the POD context there are two main difficulties: firstly, in all except a few cases, the transition density of the diffusion process, and thus the complete data log-likelihood function, lacks an analytic expression; secondly, computing the \emph{intermediate quantity} of the expectation-step involves taking expectations under the \emph{smoothing distribution}, \ie\ the conditional distribution of the hidden states at the observation time points given the observed data record, which is not---even in the case of a known transition density---available on closed-form. These two issues make, as documented by several authors, MLE-based inference in PODs very challenging. In this paper we address these problems by applying the GPE suggested \citep[as a refinement of results obtained in][]{beskos:papaspiliopoulos:roberts:fearnhead:2006} by \cite{fearnhead:papaspiliopoulos:roberts:2008} in conjunction with SMC smoothing algorithms. Unfortunately, it has been observed by several authors that using standard SMC methods in the smoothing mode may be unreliable for larger observation sample sizes $n$, since resampling systematically the particles leads to degeneracy of the particle paths. As a solution, we adapt the \emph{fixed-lag smoother} proposed by \cite{olsson:cappe:douc:moulines:2008} to the framework of PODs. This technique relies, in the spirit of \cite{kitagawa:1998}, on \emph{forgetting properties} of the conditional hidden chain; by this is meant that the hidden chain forgets its past when evolving, backwards as well as forwards, conditionally on the given observation sequence. The constructed algorithm avoids efficiently particle trajectory degeneracy at the cost of a bias which can however be controlled by a suitable choice of the introduced lag parameter.

In order to obtain a high performance of the particle smoother it is in general necessary to propose (mutate) the particles according a kernel that takes the information provided by the current observation into account; indeed, mutating, as in the \emph{bootstrap particle filter}, the particles ``blindly'' according to the dynamics of the hidden Markov chain will often lead to severe degeneracy of the particle importance weights. However, such an improved proposal strategy is not straightforwardly adopted to PODs, since computing the resulting importance weights involves computing a ratio of the transition density of the hidden diffusion process (for which a closed-form expression is missing in general) and that of the chosen proposal kernel. To cope with this, we follow \cite{fearnhead:papaspiliopoulos:roberts:2008} and replace each evaluation of the hidden process transition density by a draw from the GPE. Thus, the GPE serves two purposes in our algorithm as it is used, firstly, for computing unbiased estimates of particle importance weights for a particle filter based on a proposal kernel different from the transition kernel of the hidden diffusion process and, secondly, for estimating the EM intermediate quantity itself.

The contribution of our study is fourfold, since the proposed intermediate quantity estimator
\begin{enumerate}
\item approximates efficiently the expectation step in a \emph{single sweep} of the data record, yielding an algorithm with a computational complexity of order $\mathcal{O}(n N)$;
\item copes, as it is not based on any Euler discretisation or linearisation technique, efficiently with model nonlinearities;
\item has only limited computer data storage requirements, which is essential in, \eg, high frequency applications where sometimes very long measurement sequences are considered;
\item is provided with a rigorous convergence result describing its convergence to the true intermediate quantity. This result is derived via a convergence result, obtained under minimal assumptions, for the GPE-based particle smoother.
\end{enumerate}

For models exhibiting poor mixing properties, in which case we cannot expect a high performance of the fixed-lag smoother, we propose an alternative algorithm where the GPE is used in conjunction with the particle-based \emph{forward-filtering backward-smoothing} procedure proposed by \cite{godsill:doucet:west:2004}. This scheme, which relies on a decomposition of the smoothing measure that incorporates the so-called \emph{backward kernels} (\ie\ the transition kernels of the hidden Markov chain when evolving backwards in time and conditionally on the observations) of the model, avoids particle path degeneracy completely through an additional simulation pass in the time-reversed direction. Moreover, it does not suffer from the additional, model dependent bias of the fixed-lag smoother. However, these appealing properties are obtained at the cost of a significant increase of computational work, since the complexity of the scheme in question is quadratic in the number of particles.

The paper is organised as follows: In Section~\ref{section:preliminaries} we recall the concept of PODs and discuss likelihood-based inference in such models via data augmentation and the EM-algorithm. GPEs are described in Section~\ref{section:GPEs} and Section~\ref{section:appendix:EA}, and Section~\ref{section:particle:smoothing} is devoted to SMC smoothing in general. In Sections~\ref{section:fixed-lag} and \ref{section:FFBS} we introduce the fixed-lag smoother and the forward-filtering backward-simulation smoother, respectively; moreover, we discuss how these techniques can be adjusted to PODs using GPEs. A theoretical result describing the convergence of the fixed-lag-based estimator is found in Section~\ref{section:theoretical:results}, and in Section~\ref{section:simulation:study} we illustrate the method on partially observed log-growth and genetics diffusion models. In Section~\ref{section:conclusion}, the paper is concluded by some final conclusions and remarks. Proofs are found in Section~\ref{section:proofs}.

\section{Preliminaries}
\label{section:preliminaries}
In the following we assume that all random variables are defined on a common probability space $(\Omega, \mathcal{F}, \pp)$ and let $\ee$ denote expectations associated with $\pp$. Denoting by $\mathbbm{1}$ the indicator function and letting $X$ be any random variable on $(\Omega, \mathcal{F})$, we will often make use of the short-hand notation $\ee{}[X ; A] = \ee{}[X \mathbbm{1}_A]$. Let $X \define (X_t)_{t \geq 0}$ be continuous-time diffusion process taking values in some space $(\Xset, \Xalg)$, with $\Xset \subseteq \rr^{\dX}$. More specifically, the dynamics of the process is governed by the the stochastic differential equation
\begin{equation} \label{Eq:SDE1}
    \dd X_t = \mu(X_t, \theta) \, \dd t + \sigma(X_t, \theta) \, \dd W_t \eqsp,
\end{equation}
where $W \define (W_t)_{t \geq 0}$ is Brownian motion. We denote by $\ww{x}$ the law of $W$ given that $W_0 = x$ and let $(\mathcal{F}_t)_{0 \leq t}$ be the filtration generated by $W$. The functions $\mu(\cdot, \theta)$ and $\sigma(\cdot, \theta)$ are assumed to satisfy regularity conditions (locally Lipschitz with a linear growth bound) that guarantee a weakly unique, global solution of \eqref{Eq:SDE1}. We will consider a framework where the process $X$ is only partially observed at discrete time points $(t_k)_{k \geq 0}$ through the process $Y \define (Y_k)_{k \geq 0}$ taking values in some measurable space  $(\Yset, \Yalg)$. The observations of $Y$ are assumed to be, conditionally on the latent process $X$, independent and such that the conditional distribution $G_\theta$ of $Y_k$ given $X$ depends on $X_{t_k}$ only. In the following we write, in order to simplify the notation, $X_k$ instead of $X_{t_k}$. The dynamics of the diffusion as well as the measurement process depend on some unknown model parameter $\theta$ which is assumed to belong to some compact parameter space $\Theta \subseteq \rr^{\dtheta}$.  Our main target is to estimate $\theta$ using the maximum likelihood method. For simplicity we assume that the observation time points are \emph{equally spaced} and denote by $Q_\theta$ and $\init$ the transition kernel and initial distribution, respectively, of the time homogeneous Markov chain $(X_k)_{k \geq 0}$. The family $( Q_\theta(x, \cdot) ; x \in \Xset, \theta \in \Theta )$ is dominated by the Lebesque-measure $\lambda$ with corresponding Radon-Nikodym derivatives $( q_\theta(x, \cdot) ; x \in \Xset, \theta \in \Theta )$. Moreover, suppose that $G_\theta$ has a density function $g_\theta$ with respect to some measure $\mu$ on $(\Yset, \Yalg)$ such that, for $k \geq 0$,
\begin{equation*}
    \pp(Y_k \in A | X_k) = \int_A g_\theta(X_k, y) \, \mu(\dd y) \eqsp, \quad  A \in \Yalg \eqsp.
\end{equation*}
Given a record $Y_{0:n} = (Y_0, Y_1, \ldots, Y_n)$ (similar vector notation will be used also for other quantites) of observations, a consistent estimate of the parameter $\theta$ is ideally formed by maximising the \emph{observed data likelihood function} $\llkhd{n}(\theta; Y_{0:n}) \define \log \lkhd{n}(\theta; Y_{0:n})$, where
\begin{equation*} 
    \lkhd{n}(\theta; Y_{0:n}) \define \idotsint g_\theta(x_0, Y_0) \, \init(\dd x_0) \prod_{k = 1}^n g_\theta(x_k, Y_k) \, Q_\theta(x_{k - 1}, \dd x_k) \eqsp,
\end{equation*}
A problem with this approach is that we in general cannot compute $\lkhd{n}$ on closed-form, since this involves the evaluation of a high-dimensional integral over a complicated integrand. Since the partially observed diffusion model above is, like more general latent variable models, specified using conditional dependence relations, computation of parameter posterior distributions is facilitated significantly by maximising instead the \emph{complete data} log-likelihood function by means of the EM algorithm: Assume that we have at hand an initial estimate $\theta'$ of the parameter vector. In the EM algorithm an improved estimate is obtained by computing and maximising the intermediate quantity $\IMQ(\theta; \cdot)$ defined by
\begin{equation} \label{Eq:IMQ}
    \IMQ_n(\theta; \theta') \define \ee_{\theta'} \left[ \left. \sum_{k=0}^{n-1} \log q_\theta (X_k, X_{k+1}) \right| Y_{0:n} \right]  + \ee_{\theta'} \left[ \left. \sum_{k = 0}^n \log g_\theta (X_k, Y_k) \right| Y_{0:n} \right] \eqsp.
\end{equation}
Here we have written $\ee_{\theta'}$ to stress that the expectations are taken under the dynamics determined by the initial parameter $\theta'$. Under weak assumptions, repeating recursively this procedure yields a sequence of parameter estimates that converges to a stationary point $\theta^*$ of the observed data log-likelihood \citep{wu:1983}. As clear from \eqref{Eq:IMQ}, computing $\IMQ_n$ requires the computation of expected values under the smoothing distribution, \ie\ the distribution of the state sequence $X_{0:n}$ conditionally on the observations $Y_{0:n}$, given by, for $A \in \Xalg^{\varotimes(n + 1)}$,
\begin{equation} \label{eq:def:smoothing:dist}
    \sm{n}(A  ; \theta) \define  \frac{\idotsint_A
    \lk_\theta(x_0, Y_0) \, \init(\ud x_0) \prod_{k = 1}^n \lk_\theta(x_k, Y_k) \, \hk_\theta(x_{k - 1}, \ud x_k)}{\lkhd{n}(\theta ; Y_{0:n})} \eqsp.
\end{equation}
Of special interest is the \emph{filter distribution}, \ie\ the distribution of $X_n$ conditionally on $Y_{0:n}$, given by the restriction $\sm{n|n}(A) \define \sm{n}(\Xset^n \times A)$, $A \in \Xalg$, of the smoothing distribution to the last component.
It is easily shown that the flow $(\sm{k})_{k = 0}^\infty$ satisfies the well-known \emph{forward smoothing recursion}
\begin{equation} \label{eq:smoothing:recursion}
    \sm{k + 1}(A ; \theta) =  \frac{\lkhd{k}(\theta ; Y_{0:k})}{\lkhd{k + 1}(\theta ; Y_{0:k + 1})} \iint_A \lk_\theta(x_{k + 1}, Y_{k + 1}) \, \hk_\theta(x_k, \ud x_{k + 1}) \, \sm{k}(\ud x_{0:k} ; \theta) \eqsp,
\end{equation}
where $A \in \Xalg^{\otimes (k + 2)}$. By introducing the (non-Markovian) transition kernel
\[
    \uk{k}(x_k, A ; \theta) \define \int_A \lk_\theta(x_{k + 1}, Y_{k + 1}) \, \hk_\theta(x_k, \ud x_{k + 1}) \eqsp,
\]
for $x_k \in \Xset$ and $A \in \Xalg$, we may rewrite the recursion \eqref{eq:smoothing:recursion} as
\begin{equation} \label{eq:smoothing:recursion:alt}
    \sm{k + 1}(A ; \theta) = \frac{\iint_A \uk{k}(x_k, \ud x_{k + 1} ; \theta) \, \sm{k}(\ud x_{0:k} ; \theta)}{\iint \uk{k}(x_k, \ud x_{k + 1} ; \theta) \, \sm{k}(\ud x_{0:k} ; \theta)} \eqsp.
\end{equation}
Here the normalised (Markovian) kernel $\uk{k}(x_k, A ; \theta) / \uk{k}(x, \Xset ; \theta)$ is the so-called \emph{optimal kernel} describing the distribution of $X_{k + 1}$ given $X_k = x_k$ \emph{and} the new observation $Y_{k + 1}$.

In general, a closed-form solution of the recursion \eqref{eq:smoothing:recursion} is not available. A standard approach is thus to apply some SMC smoothing algorithm (described in in Section~\ref{section:particle:smoothing}) to approximate the expectations in \eqref{Eq:IMQ}. Unfortunately, both the SMC smoother itself as well as the intermediate quantity \eqref{Eq:IMQ} call for the transition density $q_\theta$, which is usually unknown except in a few special cases. Nevertheless, results obtained by \citet{beskos:papaspiliopoulos:roberts:fearnhead:2006} and \citet{fearnhead:papaspiliopoulos:roberts:2008} offer a method for estimating this density \emph{without bias}. A full treatment of this technique---which is a key ingredient of the estimation technique proposed here---is beyond the scope of this paper; nevertheless, the main framework and assumptions are described briefly in the next section. In addition, some more details can be found in Appendix~\ref{section:appendix:EA}.

\subsection{Generalised Poisson estimators}
\label{section:GPEs}
Define the function
\begin{equation*}
    \eta(\cdot, \theta) : u \mapsto \int^u \frac{1}{\sigma(v, \theta)} \, \dd v \eqsp,
\end{equation*}
and set $\Xnew{t} \define \eta(X_t, \theta)$. Denote by $\inverse{f}$ the inverse of any invertable function $f$. By applying It$\hat{\mathrm{o}}$'s formula we obtain the stochastic differential equation
\begin{equation} \label{Eq:SDE2}
    \dd \Xnew{t} = \alpha(\Xnew{t},\theta) \, \dd t + \dd W_t \eqsp,
\end{equation}
where
\begin{equation*} \label{eq:def:alpha}
    \alpha(u, \theta) \define \frac{\mu \{ \inverse{\eta}(u, \theta), \theta \}}{\sigma \{ \inverse{\eta}(u, \theta), \theta \}} + \frac{1}{2} \sigma' \{ \inverse{\eta}(u, \theta), \theta \} \eqsp,
\end{equation*}
for the transformed process $\Xnew{} \define (\Xnew{t})_{t \geq 0}$. Using again the notation $\Xnew{k} = \Xnew{t_k}$, let $\tilde{q}_\theta$ be the transition density (with respect to the Lebesgue measure $\lambda$) of $(\Xnew{k})_{k \geq 0}$. Then, straightforwardly,
\begin{equation} \label{eq:trans:dens:id}
    q_\theta(x, x') = \tilde{q}_\theta(x, x') |\eta'(x', \theta)| \eqsp.
\end{equation}
Assume the following:
\begin{hyp}{Beskos:C0}
The process $(M_t)_{t \geq 0}$, with
\begin{equation*} \label{Eq:RN_ds/dw}
M_t \define \exp \left(\int_0^t \alpha(\Xnew{s} , \theta) \, \dd \Xnew{s} + \int_0^t \alpha^2(\Xnew{s} , \theta ) \, \dd s \right) \eqsp,
\end{equation*}
is a martingale with respect to $\ww{x}$;
\end{hyp}
\begin{hyp}{Beskos:C1}
    $\alpha(\cdot, \theta)$ is continuously differentiable;
\end{hyp}
\begin{hyp}{Beskos:C2}
    $\alpha^2(\cdot, \theta) + \alpha'(\cdot, \theta)$ is bounded from below by some function $l(\theta)$.
\end{hyp}

Under these conditions, the GPE approach developed by \cite{fearnhead:papaspiliopoulos:roberts:2008} makes it possible to generate random variables $\tilde{V}_\theta(x, x')$ with $\ee \tilde{V}_\theta(x, x') = \tilde{q}_\theta(x, x')$ for any $(x, x') \in \Xset^2$, \ie\  $\tilde{V}_\theta(x, x')$ estimates the transition density $\tilde{q}_\theta$ without any bias, for a large class of diffusions of type \eqref{Eq:SDE2}. Then, letting $\qdraw{x, x'}{}{\theta} \define \tilde{V}_\theta(x, x') |\eta'(x', \theta)|$ yields, using \eqref{eq:trans:dens:id}, $\ee \qdraw{x, x'}{}{\theta} = q_\theta(x, x')$.
A full description of GPEs is beyond the scope of this paper; however, its main features are discussed in Appendix~\ref{section:appendix:EA}. In this paper we represent the GPE by a kernel $\qkern_\theta$ in sense that $\qdraw{x, x'}{}{\theta} \sim \qkern_\theta(x, x', \cdot)$. Similarly, using the related \emph{exact algorithm} developed by \cite{beskos:papaspiliopoulos:roberts:fearnhead:2006}, it is possible to construct a kernel $\lqkern_\theta$ such that $\ee \logqdraw{x, x', \theta}{}{\theta} = \log q_\theta(x, x') $ for draws $\logqdraw{x, x', \theta}{}{\theta} \sim \lqkern_\theta(x, x', \cdot)$. Appealingly, it is in many cases (see Section~\ref{section:simulation:study} for examples) possible to construct $\qkern_\theta$ and $\lqkern_\theta$ such that the functions $\theta \mapsto \qdraw{x, x'}{}{\theta}(\omega)$ and $\theta \mapsto \logqdraw{x, x'}{}{\theta}(\omega)$ are continuous for any fixed outcome $\omega \in \Omega$, yielding unbiased estimates of $q_\theta$ and $\log q_\theta$ \emph{for all} $\theta \in \Theta$ \emph{simultaneously}. This useful property makes, as we will see, the GPE approach well suited to numerical (log-)likelihood function optimisation.


\subsection{GPE-based particle smoothing}
\label{section:particle:smoothing}

Since we in this part deal with the problem of sampling $\sm{k}(\cdot ; \theta)$ for a given \emph{fixed} parameter value, we will throughout this section expunge $\theta$ from the notation. To begin with, we assume that we know the transition kernel density $q$.

In order to describe precisely how SMC methods may be used for producing approximate solutions to the smoothing recursion \eqref{eq:smoothing:recursion}, we suppose that we are given a weighted sample $(\parti{0:k|k}{i}, \wgt{k}{i})_{i = 1}^N$ of particle and associated weights, each particle $\parti{0:k|k}{i} = (\parti{1|k}{i}, \ldots, \parti{k|k}{i})$ being a random variable in $\Xset^{k + 1}$, approximating $\sm{k}$ in the sense that
\begin{equation} \label{eq:targeting}
    \partsm{k}(f) \define \left( \wgtsum{k} \right)^{-1} \sum_{i = 1}^N \wgt{k}{i} f(\parti{0:k|k}{i}) \approx \sm{k}(f) \eqsp,
\end{equation}
where $\wgtsum{k} \define \sum_{\ell = 1}^N \wgt{k}{\ell}$, for a large class of estimand functions $f$ on $\Xset^{k + 1}$. Now, in order to form an updated particle sample approximating $\sm{k + 1}$, as a new observation $Y_{k + 1}$ becomes available, a natural approach is to replace $\sm{k}$ in \eqref{eq:smoothing:recursion:alt} by its particle approximation. This yields the mixture (recall the notation $\delta_a$ for a Dirac mass located at $a$)
\begin{equation*}
    \partmixture{k + 1}(A) \define \sum_{i = 1}^N \frac{\wgt{k}{i} \uk{k}(\parti{k|k}{i}, \Xset)}{\sum_{\ell = 1}^N \wgt{k}{\ell} \uk{k}(\parti{k|k}{\ell}, \Xset)} \int_A \frac{\uk{k}(\parti{k|k}{i}, \ud x_{k + 1})}{\uk{k}(\parti{k|k}{i}, \Xset)} \, \delta_{\parti{0:k|k}{i}}(\ud x_{0:k}) \eqsp,
\end{equation*}
for $A \in \Xalg^{\otimes (k + 2)}$. Now, the aim is to simulate a new set of particles from $\partmixture{k + 1}$ and repeat this recursively to obtain particle samples approximating the smoothing distributions at all time steps. However, since we in general cannot neither simulate draws from the optimal kernel nor compute the mixture weights $\uk{k}(\parti{k|k}{i}, \Xset)$, we apply importance sampling and draw new particles from the instrumental mixture distribution
\begin{equation*}
    \instr{k + 1}(A) \define \sum_{i = 1}^N \frac{\wgt{k}{i} \adj{k}{i}}{\sum_{\ell = 1}^N \wgt{k}{\ell} \adj{k}{\ell}} \int_A \delta_{\parti{0:k|k}{i}}(\ud x_{0:k}) \, \proposal{k}\left( \parti{k|k}{i}, \ud x_{k + 1} \right) \eqsp,
\end{equation*}
for $A \in \Xalg^{\otimes (k + 2)}$, where $\proposal{k}$ is a Markovian proposal kernel and $(\adj{k}{i})_{i = 1}^N$ are positive numbers referred to as \emph{adjustment multiplier weights}. We will from now on assume that $\adj{k}{i} = \adjfunc{k}(\parti{0:k|k}{i})$ for some nonnegative function $\adjfunc{k} : \Xset^{k + 1} \rightarrow \rr^+$ and that each kernel $\proposal{k}$ has a density $\propdens{k}$ with respect to $\lambda$. Simulating a particle $\parti{0:k + 1|k + 1}{i}$ from $\instr{k + 1}$ is easily done by, firstly, drawing, according to the probability distribution proportional to $(\wgt{k}{i} \adj{k}{i})_{i = 1}^N$, a mixture component (or ancestor) index $\idx{k}{i}$ among $\{1, \ldots, N\}$ and, secondly, extending the selected ancestor with a draw from the proposal kernel, \ie\ letting $\parti{0:k + 1|k + 1}{i} \define (\parti{0:k|k}{\idx{k}{i}}, \parti{k + 1|k + 1}{i})$ with
$\parti{k + 1|k + 1}{i} \sim \proposal{k}(\parti{k|k}{\idx{k}{i}}, \cdot)$. After this, the drawn particle is assigned the importance weight
\begin{equation} \label{eq:weighting:step}
    \wgt{k + 1}{i} \define \wgtfunc{k + 1}\left( \parti{0:k + 1|k + 1}{i} \right) \eqsp,
\end{equation}
where, for $x_{0:k + 1} \in \Xset^{k + 2}$,
$$
    \wgtfunc{k + 1}(x_{0:k + 1}) \define \lk(x_{k + 1}, Y_{k + 1}) \adjfunc{k}^{-1}(x_{0:k}) \frac{\hd(x_k, x_{k + 1})}{\propdens{k}(x_k, x_{k + 1})} \eqsp,
$$
implying $\wgt{k + 1}{i} \propto \ud \partmixture{k + 1} / \ud \instr{k + 1}(\parti{0:k + 1|k + 1}{i})$. Finally, the weighted particle sample formed by the updated particles and weights is returned as an approximation of $\sm{k + 1}$. Moreover, since the filter distribution is the marginal of the smoothing distribution with respect to the last component, an estimate of $\sm{k + 1|k + 1}$ is formed by the marginal sample $(\parti{k + 1|k + 1}{i}, \wgt{k + 1}{i})_{i = 1}^N$.

Proposing and selecting the particles according to the dynamics of the latent process, \ie\ without making use of the information about the current state provided by the current observation, by letting $\proposal{k} \equiv \hk$ and $\adjfunc{k} \equiv \mathbf{1}$ for all $k$, corresponds to the bootstrap particle filter proposed by \citet{gordon:salmond:smith:1993}.

The algorithm, which was developed gradually by, mainly, \citet{handschin:mayne:1969}, \citet{gordon:salmond:smith:1993}, and \citet{pitt:shephard:1999}, will be referred to as the \emph{auxiliary particle smoother} (APS). In the setting of a partially observed diffusion process we do not have access to a closed-form expression of the transition density $\hd$, which is needed when evaluating the importance weight function $\wgtfunc{k + 1}$. However, the GPE makes it possible to estimate this density without bias via the kernel $\qkern$. This yields following algorithm, in following referred to as the \emph{GPE-based particle smoother} (\GPEAPS), in which $\hd$ in the weighting operation \eqref{eq:weighting:step} is replaced by the Monte Carlo estimate
\begin{equation} \label{eq:def:qest}
    \qest{\alpha}{}(x, x') \define \frac{1}{\alpha} \sum_{\ell = 1}^\alpha \qdraw{x, x'}{\ell}{} \eqsp,
\end{equation}
where the $\qdraw{x, x'}{\ell}{}$'s are drawn independently from $\qkern(x, x', \cdot)$.
Denote by
\begin{equation} \label{eq:def:wgtfuncest}
    \wgtfuncest{k + 1}{\alpha}(x_{0:k + 1}) \define \lk(x_{k + 1}, Y_{k + 1}) \adjfunc{k}^{-1}(x_{0:k}) \frac{\qest{\alpha}{}(x_k, x_{k + 1})}{\propdens{k}(x_k, x_{k + 1})} \eqsp,
\end{equation}
the resulting estimated importance weight function. One iteration of the \GPEAPS\ is described in detail in the following scheme.

\bigskip

\begin{algorithm}{GPEAPS}{\label{alg:SMC:smoothing}
    \qcomment{One iteration of \GPEAPS}
    \qinput{$(\parti{0:k|k}{i}, \wgt{k}{i})_{i = 1}^N$,
    $\proposal{k}$, $\alpha$}}
    \qfor $i \qlet 1$ \qto $N$ \\
    simulate $\idx{k}{i} \sim (\wgt{k}{j} \adj{k}{j} / \sum_{\ell = 1}^N \wgt{k}{\ell} \adj{k}{\ell})_{j = 1}^N$; \\
    simulate $\parti{k + 1|k + 1}{i} \sim \proposal{k}(\parti{k|k}{\idx{k}{i}}, \cdot)$; \\
    set $\parti{0:k + 1|k + 1}{i} \qlet (\parti{0:k|k}{\idx{k}{i}}, \parti{k + 1|k + 1}{i})$; \\
    simulate $\qdraw{\parti{k:k + 1|k + 1}{i}}{1:\alpha}{} \sim \qkern^{\varotimes \alpha}(\parti{k:k + 1|k + 1}{i}, \cdot)$; \\
    compute $\wgtfuncest{k + 1}{\alpha}$ via \eqref{eq:def:wgtfuncest}; \\
    set $\wgt{k + 1}{i} \qlet \wgtfuncest{k + 1}{\alpha}(\parti{k:k + 1|k + 1}{i})$; \qrof \\
    \qreturn $(\parti{0:k + 1|k + 1}{i}, \wgt{k + 1}{i})_{i = 1}^N$.
\end{algorithm}

\bigskip

Here we have used the notations $\qdraw{x, x'}{1:\alpha}{} \define (\qdraw{x, x'}{1}{}, \ldots, \qdraw{x, x'}{\alpha}{})$ and $\qkern^{\varotimes \alpha}(x, x', \cdot) \define \qkern(x, x', \cdot) \varotimes \cdots \varotimes \qkern(x, x', \cdot)$ ($\alpha$ times). Algorithm~\ref{alg:SMC:smoothing} extends the \emph{random weight auxiliary particle filter} proposed by \citet{fearnhead:papaspiliopoulos:roberts:2008} to the smoothing mode. Note that we have, in the scheme above, suppressed the dependence of the particles and the particle weights on $\alpha$ from the notation for clarity.

In the selection operation of Step~(2), each particle index is drawn from the probability distribution formed by the adjusted weights $(\wgt{k}{j} \adj{k}{j} / \sum_{\ell = 1}^N \wgt{k}{\ell} \adj{k}{\ell})_{j = 1}^N$. Letting $\nbr{k}{i}$ denote the number of times that index $i$ was drawn, the selection operation may be alternatively expressed as
\begin{equation} \label{eq:mult:selection}
    (\nbr{k}{1}, \ldots, \nbr{k}{N}) \sim \operatorname{Mult}\left( N, \left( \frac{\wgt{k}{j} \adj{k}{j}}{\sum_{\ell = 1}^N \wgt{k}{\ell} \adj{k}{\ell}} \right)_{j = 1}^N \right) \eqsp.
\end{equation}
There are however many alternative ways of performing selection; \eg, one may set $\nbr{k}{i} \define \lfloor N \wgt{k}{i} \adj{k}{i} / \sum_{\ell = 1}^N \wgt{k}{\ell} \adj{k}{\ell} \rfloor + H_k^i$ with
\begin{multline} \label{eq:deter:plus:res:selection}
(H_k^1, \ldots, H_k^N) \\ \sim \operatorname{Mult}\left( \sum_{i = 1}^N \left \langle \frac{N \wgt{k}{i} \adj{k}{i}}{\sum_{\ell = 1}^N \wgt{k}{\ell} \adj{k}{\ell}} \right \rangle , \left( \frac{ \langle N \wgt{k}{i} \adj{k}{i} / \sum_{\ell = 1}^N \wgt{k}{\ell} \adj{k}{\ell} \rangle}{\sum_{j = 1}^N \langle N \wgt{k}{j} \adj{k}{j} / \sum_{\ell = 1}^N \wgt{k}{\ell} \adj{k}{\ell} \rangle} \right)_{i = 1}^N       \right) \eqsp,
\end{multline}
where $\lfloor x \rfloor$ denotes the integer part of a real number $x$ and $\langle x \rangle \define x - \lfloor x \rfloor$. In this selection schedule, which was proposed by \citet{liu:chen:1995} under the name \emph{deterministic plus residual multinomial resampling}, index $i$ is first copied $\lfloor N \wgt{k}{i} \adj{k}{i} / \sum_{\ell = 1}^N \wgt{k}{\ell} \adj{k}{\ell} \rfloor$ times; the remaining $\sum_{i = 1}^N \langle N \wgt{k}{i} \adj{k}{i} / \sum_{\ell = 1}^N \wgt{k}{\ell} \adj{k}{\ell} \rangle$ indices are hereafter drawn multinomially with respect to weights proportional to the residuals $(\langle N \wgt{k}{i} \adj{k}{i} / \sum_{\ell = 1}^N \wgt{k}{\ell} \adj{k}{\ell} \rangle)_{i = 1}^N$. All theoretical results obtained in the following will hold for both the selection schedules \eqref{eq:mult:selection} and \eqref{eq:deter:plus:res:selection}. In addition, our results are easily extended to selection schemes based on \emph{Poisson, binomial, and Bernoulli branching} \citep[see][for a theoretical analysis of these algorithms]{douc:moulines:2008}; however, since the number of drawn indices are random in this case, we omit these results for brevity.

\subsubsection{Convergence of the \GPEAPS}

We will describe the convergence, as $N$ tends to infinity, of the self-normalised Monte Carlo approximations formed by weighted particle samples returned by Algorithm~\ref{alg:SMC:smoothing} using the concept of \emph{consistency} \citep[adopted from][]{douc:moulines:2008} defined in the following. Let $(\partspace, \alg(\partspace))$ denote some given state space and $(\parti{}{i},\wgt{}{i})_{i = 1}^N$ a $\partspace$-valued particle sample.

\begin{definition}  \label{def:consistency}
    A weighted sample $(\parti{}{i},\wgt{}{i})_{i = 1}^N$ is \emph{consistent} for a probability measure $\mu$ and a set $\cset \subseteq \Lp{1}(\partspace, \mu)$ if, as $N \rightarrow \infty$,
    \begin{equation} \label{eq:consistencyconvergence}
        \wgtsum{}^{-1} \sum_{i=1}^N \wgt{}{i} f(\parti{}{i}) \ConvP \mu(f) \eqsp, \quad \text{for\ all\ } f \in \cset \eqsp,
    \end{equation}
    and, additionally,
    \begin{equation}\label{eq:weightconvergence}
        \wgtsum{}^{-1} \max_{1 \leq i \leq N} \wgt{}{i} \ConvP 0 \eqsp.
    \end{equation}
\end{definition}

The following assumption is mild (in fact, minimal) but essential when establishing consistency of the \GPEAPS\ scheme.

\begin{hyp}{adj:in:L1}
    For all $0 \leq k \leq n$, $\adjfunc{k} \in \Lp{1}(\Xset^{k + 1}, \sm{k})$ and $\uk{k}(\cdot, \Xset) \in \Lp{1}(\Xset, \sm{k|k})$.
\end{hyp}

\begin{proposition} \label{prop:cons:GPEAPS}
    Assume \refhyps{Beskos:C0}{adj:in:L1} and that the initial sample $(\parti{0}{i}, \wgt{0}{i})_{i = 1}^N$ is consistent for $(\sm{0}, \Lp{1}(\Xset, \sm{0}))$. Then, for all $1 \leq k \leq n$, each sample $(\parti{0:k|k}{i}, \wgt{k}{i})_{i = 1}^N$ produced by Algorithm~\ref{alg:SMC:smoothing} is consistent for $(\sm{k}, \Lp{1}(\Xset^{k + 1}, \sm{k}))$. The same is true when the multinomial selection schedule \eqref{eq:mult:selection} is replaced by deterministic plus residual multinomial selection \eqref{eq:deter:plus:res:selection}.
\end{proposition}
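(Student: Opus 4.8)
The plan is to argue by induction over $k$, building on the weighted-sample framework of \citet{douc:moulines:2008} whose consistency-preservation results I would extend to accommodate the auxiliary randomisation introduced by the GPE. The base case holds by assumption. For the inductive step, suppose $(\parti{0:k|k}{i}, \wgt{k}{i})_{i = 1}^N$ is consistent for $(\sm{k}, \Lp{1}(\Xset^{k + 1}, \sm{k}))$; I would then decompose one pass of Algorithm~\ref{alg:SMC:smoothing} into the three operations of selection, mutation, and weighting, and track the consistency set through each. The central device is to lift the analysis to an \emph{extended} state space on which each particle carries, besides its position $\parti{0:k+1|k+1}{i}$, the vector $\qdraw{\parti{k:k+1|k+1}{i}}{1:\alpha}{}$ of GPE draws. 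On this enlarged space the estimated weight $\wgtfuncest{k+1}{\alpha}$ is an ordinary deterministic function, and the randomness of the GPE is absorbed into the enlarged mutation kernel $\proposal{k} \varotimes \qkern^{\varotimes \alpha}$, so that one can hope to invoke the generic preservation lemmas verbatim.

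For the selection step, since \refhyp{adj:in:L1} gives $\adjfunc{k} \in \Lp{1}(\Xset^{k + 1}, \sm{k})$, the adjustment-weighted resampling \eqref{eq:mult:selection} (resp.\ the deterministic-plus-residual scheme \eqref{eq:deter:plus:res:selection}) maps a sample consistent for $\sm{k}$ to one consistent for the modified measure $\modmeas{k}$ proportional to $\adjfunc{k} \, \ud \sm{k}$; this is precisely the resampling lemma of \citet{douc:moulines:2008}, which transfers unchanged as it never touches the GPE. Mutating each selected ancestor through the enlarged kernel then produces an unweighted sample on the extended space consistent for the probability measure obtained by pushing $\modmeas{k}$ forward through $\proposal{k} \varotimes \qkern^{\varotimes \alpha}$, by a direct application of the generic mutation lemma.

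The crux is the weighting step. The key structural fact is unbiasedness: conditionally on the particle positions the draws $\qdraw{\cdot}{\ell}{}$ are independent with $\ee[\qest{\alpha}{}(x, x') \mid x, x'] = \hd(x, x')$, whence $\ee[\wgtfuncest{k+1}{\alpha} \mid \text{positions}] = \wgtfunc{k+1}$. Two consequences follow. First, integrating the nonnegative $\wgtfuncest{k+1}{\alpha}$ against the extended proposal reduces, by Tonelli and unbiasedness, to integrating the exact weight $\wgtfunc{k+1}$ against $\modmeas{k} \varotimes \proposal{k}$; a short computation in which the normalising factor $\sm{k}(\adjfunc{k})$ cancels against the $\adjfunc{k}^{-1}$ in $\wgtfunc{k+1}$ shows this integral equals $\sm{k}(\adjfunc{k})^{-1} \sm{k|k}(\uk{k}(\cdot, \Xset))$, finite exactly because $\uk{k}(\cdot, \Xset) \in \Lp{1}(\Xset, \sm{k|k})$ by \refhyp{adj:in:L1}. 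Thus $\wgtfuncest{k+1}{\alpha}$ lies in $\Lp{1}$ of the extended proposal. Second, the same identity, together with the recursion \eqref{eq:smoothing:recursion:alt}, shows that the reweighted extended measure, projected onto the position coordinates, equals $\sm{k+1}$. Applying the generic weighting lemma on the extended space then yields both the convergence \eqref{eq:consistencyconvergence} towards $\sm{k+1}(f)$ for every $f \in \Lp{1}(\Xset^{k+2}, \sm{k+1})$ (since $\wgtfuncest{k+1}{\alpha} \cdot f$ is extended-proposal integrable iff $f \in \Lp{1}(\sm{k+1})$) and the vanishing maximal-weight condition \eqref{eq:weightconvergence}, closing the induction.

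I expect the main obstacle to be the rigorous justification that the generic weighting lemma applies on the extended space under the \emph{minimal} hypothesis \refhyp{adj:in:L1} alone: the weights now carry the fluctuating factor $\qest{\alpha}{}$, and one must confirm that the law of large numbers underlying that lemma—stated for triangular arrays of row-wise conditionally independent summands—survives with only a first-moment (unbiasedness) control of the GPE and no variance assumption. The conditional independence of the GPE draws across particles, and the reduction of the governing $\Lp{1}$ requirement to $\uk{k}(\cdot, \Xset) \in \Lp{1}(\sm{k|k})$ through unbiasedness, are exactly what render this feasible; the delicate part is showing that the maximal-weight control \eqref{eq:weightconvergence} persists despite the extra GPE variability, for which the built-in propagation of that condition by the generic lemma—once its integrability hypothesis has been verified—is the natural route.
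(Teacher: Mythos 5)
Your proof is correct and follows the same overall architecture as the paper's: induction on $k$, decomposition of one iteration into adjustment-weighting, selection and mutation, the resampling step dispatched by the consistency-preservation theorem of \citet{douc:moulines:2008}, and the governing integrability requirement reduced---via unbiasedness of the GPE and Tonelli---to exactly \refhyp{adj:in:L1}. The one genuine difference is how the random weights are handled. The paper never passes to an extended state space; it instead formulates and proves a self-contained \emph{random weight mutation} lemma (Lemma~\ref{lem:particleconsistency}) directly from the triangular-array law of large numbers (Theorem~\ref{th:DoucMoulines}), verifying the Lindeberg-type condition and the maximal-weight condition \eqref{eq:weightconvergence} by hand through truncations at arbitrary levels $C$ and $C'$ followed by a dominated-convergence passage---this is precisely the ``delicate part'' you flag, and the paper settles it with first-moment control only, no variance assumption. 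Your alternative of augmenting each particle with its GPE draws, so that $\wgtfuncest{k+1}{\alpha}$ becomes a deterministic function of the enlarged state and the deterministic-weight mutation theorem applies verbatim to the kernel $\proposal{k} \varotimes \qkern^{\varotimes \alpha}$, is a legitimate shortcut: that theorem is stated for general state spaces under $\Lp{1}$ conditions alone, and the marginalisation identity you give shows its hypotheses are met, with the maximal-weight propagation inherited for free. What the paper's route buys is an explicit, reusable lemma isolating the GPE-based mutation step (which the authors invoke to claim compatibility with any selection schedule whose consistency is established in the literature); what yours buys is economy, at the price of carefully checking that the weight on the enlarged space has the exact Radon--Nikodym structure the generic lemma requires.
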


The proof of Proposition~\ref{prop:cons:GPEAPS} is postponed to Appendix~\ref{section:proof:GPEAPS:conv}.

\subsection{Fixed-lag smoothing}
\label{section:fixed-lag}

Unfortunately, it has been observed by several authors that using standard SMC methods in the smoothing mode may be unreliable for larger observation sample sizes $n$, since resampling systematically the particles degenerates the particle paths. Indeed, when $k \ll n$, most (or possibly all) marginal particles $(\parti{k|n}{i})_{i = 1}^N$ will coincide, resulting in a significant Monte Carlo error when estimating any expectation of $X_k$ given $Y_{0:n}$ using the produced particles. Especially, returning to the problem of estimating the intermediate quantity $\IMQ_n$ in \eqref{Eq:IMQ}, for any type of \emph{additive functional} $t(x_{0:n}) \define \sum_{k = 0}^{n - 1} s_k(x_{k:k + 1})$, $(s_k)_{k = 0}^{n - 1}$ being a set of functions (cf. the two terms of \eqref{Eq:IMQ}), we may expect that the estimator
\begin{equation} \label{eq:crude:estimator}
    (\wgtsum{n})^{- 1} \sum_{k = 0}^{n - 1} \sum_{i = 1}^N \wgt{n}{i} s_k( \parti{k:k + 1|n}{i})
\end{equation}
of $\ee [ t(X_{0:n}) | Y_{0:n} ]$ is poor when $n$ is large. To compensate for this degeneracy the particle sample size $N$ has to be increased drastically, yielding a computationally inefficient algorithm.

On the other hand, since we may expect that remote observations are only weakly dependent, it should hold that, for a large enough integer $\Delta_n$,
\begin{equation*} \label{eq:forgetting:approx}
    \ee \left[ s_k(X_{k:k + 1}) | Y_{0:n} \right] \approx \ee \left[ s_k(X_{k:k + 1}) | Y_{0:k(\Delta_n)} \right] \eqsp,
\end{equation*}
where $k(\Delta_n) \define \min \{ k + \Delta_n, n\}$, yielding
\begin{equation} \label{eq:forgetting:approx-2}
    \ee [ t(X_{0:n}) | Y_{0:n} ] = \sum_{k = 0}^{n - 1} \ee \left[ s_k(X_{k:k + 1}) | Y_{0:n} \right ] \approx  \sum_{k=0}^{n-1} \ee \left[ s_k(X_{k:k+1}) | Y_{0:k(\Delta_n)} \right] \eqsp.
\end{equation}
Thus, as long as the approximation \eqref{eq:forgetting:approx-2} is relatively precise for a $\Delta_n$ which is smaller than the average particle trajectory collapsing time, \ie\ most marginal particles $(\parti{k|k(\Delta_n)}{i})_{i = 1}^N$ are different for all $k$, we should replace \eqref{eq:crude:estimator} by the estimator
\begin{equation} \label{eq:lag:estimator}
    \sum_{k = 0}^{n - 1} \left( \wgtsum{k(\Delta_n)} \right)^{-1} \sum_{i = 1}^N \wgt{k(\Delta_n)}{i} s_k\left( \parti{k:k + 1|k(\Delta_n)}{i} \right) \eqsp.
\end{equation}
The lag-based approximation \eqref{eq:lag:estimator} may be computed recursively in a \emph{single sweep} of the data with only limited computer data storage demands, and computing \eqref{eq:lag:estimator} is clearly not more computationally demanding than computing \eqref{eq:crude:estimator} (having $O(nM)$ complexity); see \citet{olsson:cappe:douc:moulines:2008} for details. Finally, using \eqref{eq:lag:estimator} in conjunction with the kernel $\lqkern_\theta$ for estimating $\log \hd_\theta$ gives us the following approximation of the intermediate quantity $\IMQ_n(\theta; \theta')$:
\begin{equation} \label{eq:def:Q:est}
    \IMQ_n^N (\theta; \theta') \define \sum_{k = 0}^{n - 1} \left( \Omega_{k(\Delta_n)}^{N, \theta'} \right)^{ - 1} \sum_{i = 1}^N \wgt{k(\Delta_n)}{i, \theta'} s_k^{\bar{\alpha}}\left( \parti{k:k + 1|k(\Delta_n)}{i, \theta'} ; \theta \right) \eqsp,
\end{equation}
where, for $(x, x') \in \Xset^2$,
\[
    s_k^{\bar{\alpha}}(x, x' ; \theta)  \define \frac{1}{\bar{\alpha}} \sum_{\ell = 1}^{\bar{\alpha}} \logqdraw{x, x'}{\ell}{\theta} + \log \lk_\theta(x', Y_{k + 1})
\]
and
\[
    \logqdraw{x, x'}{1:\bar{\alpha}}{\theta} \sim \lqkern_\theta^{\varotimes \bar{\alpha}}(x, x', \cdot) \eqsp.
\]
In \eqref{eq:def:Q:est} we have added $\theta'$ as an index to the particles as well as the  associated weights to indicate that the particle system of the fixed-lag smoother is evolved under the dynamics determined by the initial parameter value.

\subsubsection{Convergence of the intermediate quantity}
\label{section:theoretical:results}
Under weak assumptions on the functions $\adjfunc{k}$, the kernels $\uk{k}$ and $\lqkern$, and the local likelihoods functions $\log \lk_\theta(\cdot, Y_k)$ one may establish the convergence of the approximate intermediate quantity \eqref{eq:def:Q:est}. Thus, define, for a given lag $\Delta_n$ and parameters $(\theta, \theta')$, the bias
\begin{multline} \label{eq:def:bias}
    b_n(\Delta_n, \theta, \theta') \define \sum_{k = 0}^{n - 1} \int s_k(x_{k:k + 1}, \theta) \, \sm{k(\Delta_n)}(\dd x_{k:k + 1} , \theta') \\
    - \sum_{k = 0}^{n - 1} \int s_k(x_{k:k + 1}, \theta) \, \sm{n}(\dd x_{k:k + 1} , \theta')
\end{multline}
imposed by the fixed lag. We then have the following result, which is the main result of this section.

\begin{theorem} \label{th:consistency}
Assume \refhyps{Beskos:C0}{Beskos:C2}. Let $n \geq 0$, $(\theta, \theta') \in \Theta^2$, and $(\Delta_n, \alpha, \bar{\alpha}) \in \mathbb{N}^3$. Suppose that \refhyp{adj:in:L1} holds  for $\adjfunc{k}(\cdot ; \theta')$, $\uk{k}(\cdot; \theta')$, and $\sm{k}(\cdot; \theta')$ and that the initial sample $(\parti{0}{i, \theta'}, \wgt{0}{i, \theta'})_{i = 1}^N$ is consistent for $(\sm{0}(\cdot; \theta'), \Lp{1}(\sm{0}(\cdot; \theta'), \Xset))$. Moreover, assume that the mappings $x_{0:k(\Delta_n)} \mapsto \log g_\theta(x_k, Y_k)$, $0 \leq k \leq n$, and $x_{0:k(\Delta_n)} \mapsto \int |v| \lqkern_\theta(x_k, x_{k + 1}, \dd v)$, $0 \leq k < n$, belong to $\Lp{1}(\sm{k(\Delta_n)}(\cdot; \theta'), \Xset^{k(\Delta_n) + 1})$. Then, as $N \rightarrow \infty$,
\begin{equation*}
    \IMQ^N_n(\theta, \theta') \ConvP \IMQ_n(\theta, \theta') + b_n(\Delta_n, \theta, \theta') \eqsp,
\end{equation*}
where the bias $b_n$ is defined in \eqref{eq:def:bias}.
\end{theorem}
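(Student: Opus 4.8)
The plan is to first rewrite the asserted limit so that it matches the estimator, and then reduce to a termwise statement. By the definition \eqref{eq:def:bias} of the bias we have $\IMQ_n(\theta,\theta') + b_n(\Delta_n,\theta,\theta') = \sum_{k=0}^{n-1} \int s_k(x_{k:k+1},\theta)\,\sm{k(\Delta_n)}(\dd x_{k:k+1};\theta')$, so the claim is equivalent to $\IMQ^N_n(\theta,\theta') \ConvP \sum_{k=0}^{n-1}\sm{k(\Delta_n)}(s_k(\cdot,\theta);\theta')$. Since $n$ is fixed and convergence in probability is stable under finite sums (no independence across $k$ is needed), it suffices to prove, for each $0 \le k < n$, that $\partsm{k(\Delta_n)}(s_k^{\ba}(\cdot;\theta)) \ConvP \sm{k(\Delta_n)}(s_k(\cdot,\theta);\theta')$, where $\partsm{m}$ denotes the self-normalised weighted sample returned at time $m$ (run at $\theta'$). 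I would then split the termwise difference as
\begin{equation*}
\partsm{k(\Delta_n)}(s_k^{\ba}(\cdot;\theta)) - \sm{k(\Delta_n)}(s_k(\cdot,\theta)) = \Bigl[\partsm{k(\Delta_n)}(s_k^{\ba}) - \partsm{k(\Delta_n)}(s_k)\Bigr] + \Bigl[\partsm{k(\Delta_n)}(s_k) - \sm{k(\Delta_n)}(s_k)\Bigr]\eqsp,
\end{equation*}
the first bracket being the GPE reporting noise and the second the particle approximation error, where $s_k(x,x',\theta) \define \log\hd_\theta(x,x') + \log\lk_\theta(x',Y_{k+1})$.

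The second bracket is controlled directly by Proposition~\ref{prop:cons:GPEAPS}: under \refhyp{adj:in:L1} (imposed at $\theta'$) the weighted sample $(\parti{0:k(\Delta_n)|k(\Delta_n)}{i,\theta'}, \wgt{k(\Delta_n)}{i,\theta'})_{i=1}^N$ is consistent for $(\sm{k(\Delta_n)}(\cdot;\theta'), \Lp{1}(\sm{k(\Delta_n)}(\cdot;\theta'),\Xset^{k(\Delta_n)+1}))$, so it remains only to verify that the deterministic function $s_k(\cdot,\theta)$—which depends on the trajectory only through the coordinates $(k,k+1)$—lies in this $\Lp{1}$ class. The assumed integrability of the observation map handles $\log\lk_\theta(x',Y_{k+1})$, while the unbiasedness granted by \refhyps{Beskos:C0}{Beskos:C2}, namely $\log\hd_\theta(x,x') = \int v\,\lqkern_\theta(x,x',\dd v)$, yields $|\log\hd_\theta(x,x')| \le \int |v|\,\lqkern_\theta(x,x',\dd v)$; hence the assumed integrability of $x_{0:k(\Delta_n)} \mapsto \int |v|\,\lqkern_\theta(x_k,x_{k+1},\dd v)$ dominates the transition term. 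Consequently the second bracket vanishes in probability.

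The first bracket is the main obstacle. Here I would condition on the $\sigma$-field $\mathcal{G}_N$ generated by the entire particle system at time $k(\Delta_n)$ (positions, ancestor indices and weights). Given $\mathcal{G}_N$, the reporting draws $\logqdraw{\cdot}{1:\ba}{\theta}\sim\lqkern_\theta^{\varotimes\ba}$ attached to distinct particles are independent with conditional mean $\log\hd_\theta(\parti{k:k+1|k(\Delta_n)}{i,\theta'})$, so the first bracket equals a $\mathcal{G}_N$-conditionally centred sum $\sum_{i=1}^N \bar\omega^i D_i$, with $\mathcal{G}_N$-measurable normalised weights $\bar\omega^i \define \wgt{k(\Delta_n)}{i,\theta'}/\sum_{j}\wgt{k(\Delta_n)}{j,\theta'}$ and $D_i \define \ba^{-1}\sum_{\ell=1}^{\ba}\logqdraw{\cdot}{\ell}{\theta} - \log\hd_\theta(\cdot)$. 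Because only the \emph{first} moment of the GPE draws is assumed integrable, a plain conditional-variance bound is unavailable, and a truncation at a level $M>0$ is needed: splitting each draw into its truncation at $M$ and its tail, the truncated contribution has $\mathcal{G}_N$-conditional variance bounded by $\ba^{-1}(2M)^2\max_i\bar\omega^i$, which tends to zero in probability by the vanishing-maximal-weight half \eqref{eq:weightconvergence} of consistency, whereas the tail contribution is dominated in conditional $\Lp{1}$ by $2\,\partsm{k(\Delta_n)}(\tau_M)$ with $\tau_M(x,x')\define\int|v|\indicator{|v|>M}\,\lqkern_\theta(x,x',\dd v)$. By consistency the latter converges to $2\,\sm{k(\Delta_n)}(\tau_M)$, and dominated convergence (again using $\int|v|\lqkern_\theta(\cdot,\dd v)\in\Lp{1}$) forces $\sm{k(\Delta_n)}(\tau_M)\to 0$ as $M\to\infty$.

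Assembling these pieces by a standard $\varepsilon$-argument—first choosing $M$ large to make the tail uniformly small, then letting $N\to\infty$ for fixed $M$—shows the first bracket vanishes in probability, whence the termwise convergence follows, and summation over the finite range $0\le k<n$ completes the proof. The crux is precisely this first bracket: establishing that the GPE reporting noise averages out under only $\Lp{1}$ integrability of the log-draws, which is where the maximal-weight part of consistency and the truncation/dominated-convergence mechanism do the essential work.
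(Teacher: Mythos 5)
Your proof is correct and follows essentially the same route as the paper's: the same decomposition into fixed-lag bias, particle approximation error (dispatched by Proposition~\ref{prop:cons:GPEAPS} together with the domination $|\log \hd_\theta| \le \int |v|\,\lqkern_\theta(\cdot,\dd v)$), and conditionally centred GPE reporting noise. The only cosmetic difference is that the paper packages the last step as a verification of conditions (i)--(ii) of the triangular-array result (Theorem~\ref{th:DoucMoulines}), whereas you prove the required negligibility directly; both rest on the identical mechanism of truncation at a large level combined with the vanishing maximal normalised weight \eqref{eq:weightconvergence}.
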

The proof is given in Appendix~\ref{section:proof:Q:conv}.

The bias term $b_n$, which was treated by \cite{olsson:cappe:douc:moulines:2008}, is controlled by the speed with which the hidden chain $(X_k)_{k \geq 0}$ forgets its initial distribution when evolving \emph{conditionally} on the observations. Indeed, when the state space $\Xset$ is compact it can be shown \cite[see][for details]{olsson:cappe:douc:moulines:2008} that $b_n$ is $\mathcal{O}(n \rho^{\Delta_n})$, where $0 < \rho < 1$ is the \emph{uniform} (with respect to observation records $Y_{0:n}$ as well as initial distributions $\init$) mixing coefficient of the conditional chain. From this we deduce that the lag $\Delta_n$ should be increased with $n$ at the minimum rate $c \log n$, $c > -1/\log \rho$ in order to keep the bias suppressed. Increasing $\Delta_n$ faster eliminates the bias and increases the variance of the approximation; see again \citet{olsson:cappe:douc:moulines:2008} for a detailed study of these issues. Since a similar forgetting property holds also in the case of a non-compact state space $\Xset$ \citep{douc:fort:moulines:priouret:2007}, the same arguments can be applied for very general models; however, the analysis of the general case is significantly more involved, since the mixing coefficient is neither uniform with respect to observation records nor initial distributions $\init$ in this case.

Remarkably, the convergence result in Theorem~\ref{th:consistency} holds for \emph{any fixed sample sizes} $(\alpha, \bar{\alpha})$. In particular, nothing prevents us from letting $\alpha = \bar{\alpha} = 1$, yielding a computationally very efficient algorithm; this is the choice of Section~\ref{section:simulation:study}.

\subsection{Forward-filtering backward-smoothing}
\label{section:FFBS}
Even though naive SMC implementations generally fail to estimate joint smoothing distributions efficiently, they can, as discussed above, be successfully used for estimating the marginal filter distributions (corresponding to $k = n$ in the discussion of Section \ref{section:fixed-lag}). Nevertheless, any joint smoothing distribution may be expressed in terms of marginal filter distributions via the so-called \emph{forward-filtering backward-smoothing decomposition}. Indeed, for any probability measure $\eta$ on $(\Xset, \Xalg)$, define the \emph{reverse kernel}
\begin{equation} \label{eq:backward:kernel:density:form}
    \revM{\eta}(x', A ; \theta) \define \frac{\int_A \hd_\theta(x, x') \,
    \eta(\dd x)}{\int \hd_\theta(x, x') \, \eta(\dd x)} \eqsp,
\end{equation}
where $A \in \Xalg$ and $x' \in \Xset$. The definition \eqref{eq:backward:kernel:density:form} is valid only when $x'$ belongs to the subset of $\Xset$ where the denominator is nonzero; outside this set we may let $\revM{\eta}$ take arbitrary values. It can now be shown that \citep[see \eg][Corollary~3.3.8]{cappe:moulines:ryden:2005}
\begin{equation} \label{eq:smoothing:backw:decomposition}
    \sm{n}(A ; \theta) = \idotsint_A \sm{n|n}(\dd x_n ; \theta) \, \prod_{k = 0}^{n - 1}
    \revM{\sm{k|k}}(x_{k + 1}, \dd x_k ; \theta) \eqsp,
\end{equation}
for $A \in \Xalg^{\varotimes (n + 1)}$. Using the Markovian structure of the decomposition above, a trajectory $\partdraw{0:n}$ can be simulated from $\sm{n}(\cdot ; \theta)$ by, firstly, computing recursively (via \eqref{eq:smoothing:recursion}) the filter distributions $(\sm{k|k}(\cdot ; \theta))_{k = 0}^n$ and, secondly, simulating $\partdraw{n}$ from $\sm{n|n}(\cdot ; \theta)$ and hereafter, recursively for $k = n - 1, n - 1, \ldots, 0$, $\partdraw{k}$ from $\revM{\sm{k|k}}(\partdraw{k + 1}, \cdot ; \theta)$. This scheme will in the following be referred to as \emph{forward-filtering backward-simulation} (FFBS), and we refer again to \citet{cappe:moulines:ryden:2005} for a detailed treatment.

In general we lack closed-form expressions of the filter distributions, but may estimate these efficiently using Algorithm \ref{alg:SMC:smoothing}. Hence, following \citet{doucet:godsill:andrieu:2000}, a non-degenerate particle estimate of $\sm{0:n}(\cdot ; \theta)$ can be obtained by replacing, in the decomposition \eqref{eq:smoothing:backw:decomposition}, $\sm{n|n}$ by the empirical measure $\partsm{n|n}$ and the reverse kernels $\revM{\sm{k|k}}(x_{k + 1}, \dd x_k ; \theta)$ by
\begin{equation} \label{eq:reverse:approx}
    \revM{\sm{k|k}^N}(x_{k + 1}, \dd x_k ; \theta) = \sum_{i = 1}^N \frac{\wgt{k}{i} \hd_\theta(\parti{k|k}{i}, x_{k + 1})}{\sum_{\ell = 1}^N \wgt{k}{\ell} \hd_\theta(\parti{k|k}{\ell}, x_{k + 1})} \delta_{\parti{k|k}{i}}(\dd x_k) \eqsp.
\end{equation}
Note that a draw according to $\revM{\sm{k|k}^N}(x_{k + 1}, \cdot ; \theta)$ consists of selecting position $\parti{k|k}{i}$ with probability proportional $\wgt{k}{i} \hd_\theta(\parti{k|k}{i}, x_{k + 1}) / \sum_{\ell = 1}^N \wgt{k}{\ell} \hd_\theta(\parti{k|k}{\ell}, x_{k + 1})$. In the case of PODs, a closed-form expression of $\hd_\theta$ is in general missing, and we thus replace each number $\hd_\theta(\parti{k|k}{i}, x_{k + 1})$ by a draw $\qdraw{\parti{k|k}{i}, x_{k + 1}}{}{\theta}$ from the GPE $\qkern_\theta(\parti{k|k}{i}, x_{k + 1}, \cdot)$. This gives us the following algorithm for simulating a trajectory $\partdraw{0:n}$ that is approximately distributed according to $\sm{n}$.

\bigskip

\begin{algorithm}{GPE-FFBS}{\label{alg:FFSB}
    \qcomment{GPE-based particle FFBS}
    \qinput{$(\proposal{k})_{k = 0}^{n - 1}$}}
    run Algorithm~\ref{alg:SMC:smoothing} to obtain $(\partsm{k|k}(\cdot ; \theta))_{k = 0}^n$; \\
    simulate $\partdraw{n} \sim \partsm{n|n}(\cdot ; \theta)$; \\
    \qfor $k \qlet n - 1$ \qto $0$ \\
    \qfor $i \qlet 1$ \qto $N$ \\
    simulate $\qdraw{\parti{k|k}{i}, \partdraw{k + 1}}{}{\theta} \sim \qkern_\theta(\parti{k|k}{i}, \partdraw{k + 1}, \cdot)$; \qrof \\
    simulate $\iota_k \sim (\wgt{k}{i} \qdraw{\parti{k|k}{i}, \partdraw{k + 1}}{}{\theta} / \sum_{\ell = 1}^N \wgt{k}{\ell} \qdraw{\parti{k|k}{\ell}, \partdraw{k}}{}{\theta})_{i = 1}^N$; \\
    set $\partdraw{k} \qlet \parti{k|k}{\iota_k}$ \qrof \\
    \qreturn $\partdraw{0:n} = (\partdraw{0}, \ldots, \partdraw{n})$.
\end{algorithm}

\bigskip

Algorithm~\ref{alg:FFSB} avoids the problem of degeneracy of the genealogical tree \emph{without} any implicit assumption on geometrical ergodicity of the conditional hidden chain. On the other hand, simulating a single trajectory according to Algorithm~\ref{alg:FFSB} involves $\mathcal{O}(N)$ operations, implying an overall computational cost of order $\mathcal{O}(N^2)$ for producing a sample of size $N$. Recently, \citet{douc:garivier:moulines:olsson:2009} showed how the overall computational cost of the particle-based FFBS can be reduced to $\mathcal{O}(N)$ by means of accept-reject-methods; however, it is not straightforward to adapt this approach to our framework, since one for general PODs cannot find an upper bound on the transition density of the hidden chain. For models with forgetting properties, Algorithm~\ref{alg:FFSB} should be outperformed by the fixed-lag smoother because of the quadratic complexity of the former scheme (see the coming section for examples); the FFBS should thus be seen as a generic and alternative solution in cases of poor mixing.  

\section{Simulation study}
\label{section:simulation:study}
In this section, the proposed methods are illustrated on two simulated examples, consisting of noisy observations of the models treated by \cite{beskos:papaspiliopoulos:roberts:fearnhead:2006} and \cite{beskos:papaspiliopoulos:roberts:2008}. In both examples we let, for simplicity, the measurement noise variance $\sigma_\epsilon$ be known and set to $0.1$ and assume equidistant measurements with $t_{k+1} - t_k = 1$ for all $k \geq 0$.  We use consequently $\alpha = \ba = 1$. The approximate intermediate quantity $\IMQ_n^N$ is maximised using the \emph{Nelder-Mead simplex algorithm} as implemented in MATLAB's {\verb fminsearch }-command. In order to obtain convergence of the parameter sequence returned by the Monte Carlo EM-algorithm, it is necessary to decrease, at each iteration, the bias of the particle approximation by increasing the number of particles with the iteration index. We thus follow the recommendations of \citet{fort:moulines:2003} and increase the particle sample size as the square root of the iteration number, with an initial size of 100 particles. A detailed discussion on the effect of the lag size on the quality of the final parameter estimates is given in \citet{olsson:cappe:douc:moulines:2008}; thus, we do not repeat this discussion here and stick consequently to the recommendation of increasing the lag logarithmically with the size of the observation record.

\subsection{Log-growth model}
In the first example we estimate, from simulated data, the parameters of a partially observed version of the \emph{log-growth model} discussed by \cite{beskos:papaspiliopoulos:roberts:fearnhead:2006}. The model is specified by the following system of equations:
\begin{equation} \label{eq:def:log:growth:model}
\begin{split}
    \dd X_t & = \kappa X_t(1 - X_t/\Lambda) \, \dd t + \sigma X_t \, \dd W_t \eqsp, \\
    Y_k & =  X_{t_k} + \sigma_\epsilon \epsilon_k \eqsp,
\end{split}
\end{equation}
where $(\epsilon_k)_{k \geq 0}$ are mutually independent, standard normal-distributed random variables. The noise sequence is supposed to be independent also from $W$. Applying It$\hat{\mathrm{o}}$'s formula to the transformation $\tilde{X}_t = \eta(X_t, \sigma)$, with $\eta(x,\sigma) \define -\log(x)/\sigma$, yields
\begin{equation} \label{eq:def:log:growth:model:state:transformed}
\dd \tilde{X}_t = \alpha(\tilde{X}_t) + \dd W_t \eqsp, 
\end{equation}
where $\alpha(x) \define \sigma / 2 - \kappa / \sigma + \kappa / (\sigma \lambda) \exp(- \sigma x)$. Since $\alpha$ is bounded from above, we are only required to simulate the minimum of the Brownian path and let $\Lower{\alpha}$ be $\alpha$ evaluated at this minimum; see Section~\ref{section:appendix:EA} for the meaning of $\Lower{\alpha}$. The minimum of the Brownian bridge has a known law, and given the minimum, the bridge can be constructed retrospectively using Bessel bridges \citep[see][]{beskos:papaspiliopoulos:roberts:fearnhead:2006}.
Our aim is to estimate the unknown parameters $\theta \define (\kappa, \Lambda, \sigma )$ given a record $Y_{0:1000}$ of observations. The observation set was obtained through simulation under the parameters $\theta^* = (0.1, 1000, 0.1 )$. When computing the approximate intermediate quantity $\IMQ_n^N$, the random weight fixed-lag smoother used the lag $\Delta_n = 40$ and the proposal
\begin{equation} \label{eq:proposal:ex:2}
    \proposal{k}(x, A) = \frac{1}{\sigma x} \int_A t( \{ x' - \kappa x (1 - x/\Lambda) \} / \{ \sigma x \} ; 4) \, \dd x' \eqsp,
\end{equation}
where $t(\cdot; n)$ denotes the density of the student's $t$-distribution with $n$ degrees of freedom. Further the adjustment multiplier weights are set to $1$. The proposal \eqref{eq:proposal:ex:2} is obtained by discretising the hidden dynamics using the Euler scheme. We set $\alpha = \ba = 1$. The EM output is presented in Figure~\ref{fig:LogGrowth:params}.

\begin{figure} \label{fig:LogGrowth:params}
    \centering
    \includegraphics[angle=0,width=0.75\textwidth]{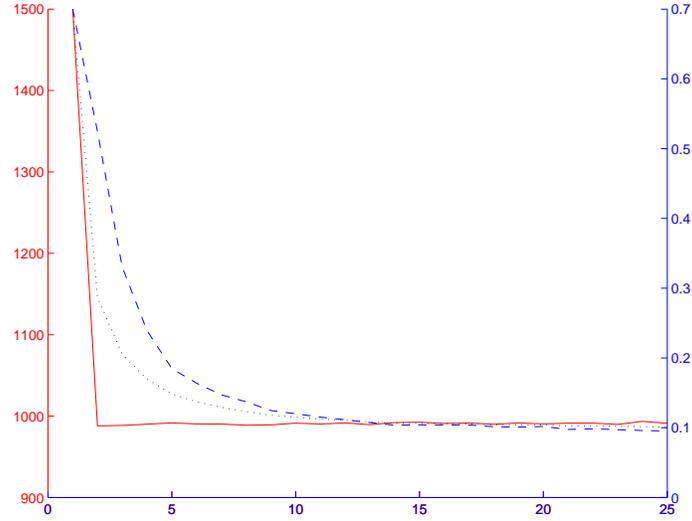}
    \caption{Convergence of $\Lambda$ (solid, left y-axis), $\kappa$ (dashed, right y-axis), and $\sigma$ (dotted, right y-axis) using the fixed-lag smoother with lag $40$.}
\end{figure}

For comparison, the estimation problem of the log-growth model was also solved using the GPE-based particle FFBS in Section~\ref{section:FFBS}. The setup was the same as for the fixed-lag smoother, but due to the significant higher computational cost of the FFBS scheme (recall Section~\ref{section:FFBS}) the number of observations was reduced to $100$. For the FFBS-based procedure, the GPE needs to be evaluated $N + 1$ times per particle and time step, \ie, once in the forward filtering pass and $N$ times in the backward simulation sweep, compared to only once for the fixed-lag smoother. 

The output of the EM learning curves obtained using the GPE-based particle FFBS is presented in Figure~\ref{fig:LogGrowth:params:FFBSi}.

\begin{figure} \label{fig:LogGrowth:params:FFBSi}
    \centering
    \includegraphics[angle=0,width=0.75\textwidth]{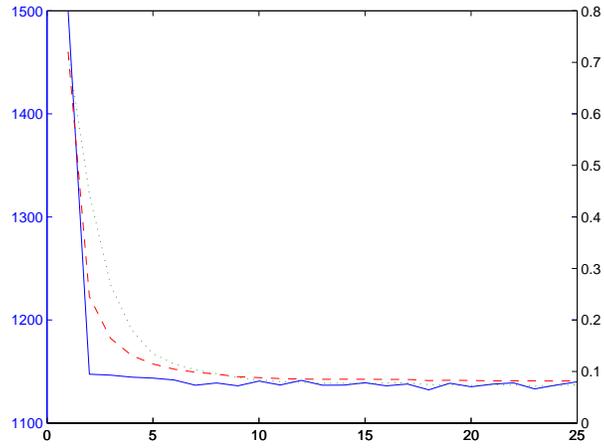}
    \caption{Convergence of $\Lambda$ (solid, left y-axis), $\kappa$ (dashed, right y-axis), and $\sigma$ (dotted, right y-axis) using the GPE-based particle FFBS on $100$ observations.}
\end{figure}

\subsection{Genetics diffusion model}
In a second example we estimate, again from simulated data, the parameters of a partially observed version of the \emph{genetics diffusion model} presented in \cite{book:kloeden:platen} and discussed by \cite{beskos:papaspiliopoulos:roberts:2008}. The model is given by
\begin{equation} \label{eq:def:log:growth:model}
\begin{split}
    \dd V_t & = (\mu + \nu V_t )\, \dd t + \sigma V_t (1 - V_t) \, \dd W_t \eqsp, \\
    Y_{t_k} & =  V_{t_k} + \sigma_\epsilon \epsilon_k \eqsp,
\end{split}
\end{equation}
where the sequence $(\epsilon_k)_{k \geq 0}$ is as in the previous example. Applying It$\hat{\mathrm{o}}$'s formula to the transformation $\tilde{X}_t = \eta(V_t, \sigma)$, where $\eta(v, \sigma) \define (\log(v) - \log(1 - v)) / \sigma$, allows for using the GPE for estimating the transition density of the latent process. In this case, the drift function $\alpha$ of the transformed process becomes more involved than in the previous example, and it is neither bounded from above nor below. Thus, we have to draw both $\Lower{\alpha}$ and $\Upper{\alpha}$ and a Brownian bridge $(\tilde{W}_s)_{s = 0}^t$ such that $\Lower{\alpha} \leq \alpha(\tilde{W}_s) \leq \Upper{\alpha}$ for all $0 \leq s \leq t$; see Section~\ref{section:appendix:EA} for a justification of this. For this purpose we apply the method proposed in \cite{beskos:papaspiliopoulos:roberts:2008}, which involves sampling first a maximum $\Upper{\operatorname{id}}$ and a minimum $\Lower{\operatorname{id}}$, and then a Brownian bridge such that $\Lower{\operatorname{id}} \leq \tilde{W}_s \leq \Upper{\operatorname{id}}$ for all $0 \leq s \leq t$. Since a linear transformation of a Brownian bridge is still a Brownian bridge, it suffices to consider the case when the path $(\tilde{W}_s)_{s = 0}^t$ is conditioned to start and end in zero. Sampling a lower and upper bound can then be done by using rejection sampling in the following way: let $(a_i)_{i \geq 0}$ with $a_0 = 0$ be an increasing sequence and consider the intervals $(-a_i, a_i]$. Since the probability that a Brownian bridge stays in a specific interval $[-K, K]$ has a known expression (having the form of an infinite series), it is possible to calculate the probability that it is contained in $(-a_i, a_i]$ but not in $(-a_{i-1}, a_{i-1}]$; this means that either its maximum is contained in $(a_{i-1}, a_i]$ or its minimum is contained in $(-a_i, -a_{i-1}]$ or both. Thus, we first propose an interval $(a_{i-1}, a_i]$; given this interval, we then propose, with probability $1/2$, a maximum conditioned to belong to $(a_{i-1}, a_i]$, otherwise a minimum in $(-a_i, -a_{i-1}]$. Since the distributions of the maximum and minimum are known on closed-form, this is easily done. Next, we propose a Brownian bridge by decomposing around the proposed maximum (minimum) as in the previous example. The resulting path $(\tilde{W}_s)_{s = 0}^t$ is accepted, with a probability depending on the path in question, only if it remains in the interval; see \cite{beskos:papaspiliopoulos:roberts:2008} for details. Finally, we set $\tilde{W}^{\pm}_\alpha \define \alpha(\tilde{W}^{\pm}_{\operatorname{id}})$.

Again we attempt to estimate the unknown parameters $\theta \define (\mu, \nu, \sigma )$ given a record $Y_{0:1000}$ of observations obtained through simulation under the parameters $\theta^* = (0.05, 0.1, 1 )$. When computing the approximate intermediate quantity $\IMQ_n^N$, the random weight fixed-lag smoother used the lag $\Delta_n = 20$. Since the state space $\rr(0,1)$ is compact, we propose the particles by simply drawing uniforms over $(0,1)$. We set $\alpha = \ba = 1$. The EM output in presented in Figure~\ref{fig:GenSDE:params}.

\begin{figure}\label{fig:GenSDE:params}
\centering
\includegraphics[angle=0,width=0.75\textwidth]{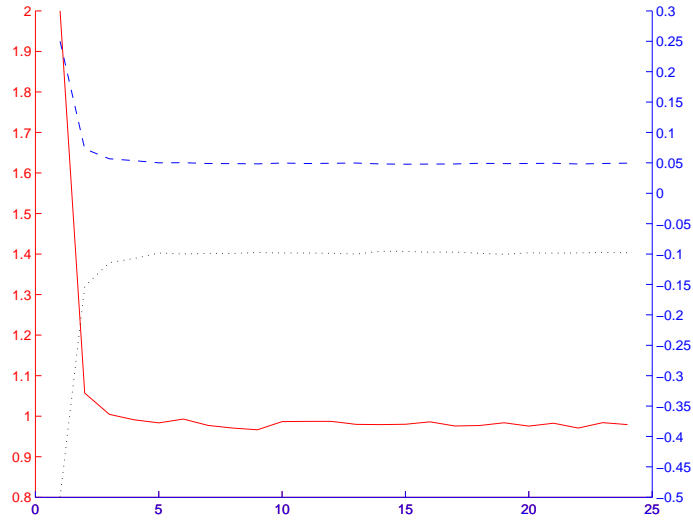}
\caption{Convergence of $\sigma$ (dotted, left y-axis), $\nu$ (dashed, right y-axis) and $\mu$ (dotted, right y-axis) .}
\end{figure}
\vspace{5mm}

\section{Conclusion}
\label{section:conclusion}
Parameter inference in general discretely and partially observed diffusion processes is an inherently difficult problem due to the lack of closed-form transition densities of the hidden Markov chain. Assuming the possibility of simulating exactly transitions of the latent diffusion process, it is possible to produce pointwise and consistent estimates of the likelihood function using the standard bootstrap particle filter, in which the particles are assigned importance weights determined completely by the known local likelihood function. In such a framework, the likelihood surface can be explored using \eg\ grid-based methods \citep{olsson:ryden:2005}. \citet{ionides:bahdra:king:2009} use the bootstrap particle filter for computing pointwise approximations of the score function and locate the maximum likelihood estimate by means of stochastic approximation. However, simulating exactly transitions of a diffusion process is in general infeasible and we are most often referred to discretisation-based methods such as the Euler scheme, imposing a nontrivially controlled bias of the final parameter estimates. Moreover, mutating blindly, as in the bootstrap particle filter, the particles without incorporating, in the proposal kernel, the information provided by the observations will in general lead to serious degeneracy of the particle weights, especially for models where the observations are informative.

Thus, in the present paper we proposed an alternative, EM-based method for estimating unknown parameters of PODs. The method combines recent approaches for estimating efficiently the joint smoothing distribution in hidden Markov models with recently proposed techniques for estimating, without bias, transition densities of a large class of diffusion processes via GPEs \citep{beskos:papaspiliopoulos:roberts:2008}. Interestingly, the GPE provides a way of producing unbiased estimates of the transition densities \emph{simultaneously} for all parameter values; this is critical when carrying through the maximisation-step of the EM-algorithm. For models having forgetting properties, the degeneracy of the particle trajectories can be efficiently avoided by means of fixed-lag smoothing \citep{kitagawa:1998,olsson:cappe:douc:moulines:2008}. The decrease of variance gained by the fixed-lag approximation is obtained at the cost of a bias; the bias is however easily controlled by increasing logarithmically the size of the lag with the size of the observation record, yielding an algorithm of $\mathcal{O}(N)$ computational complexity. We provide a detailed study of the convergence of the GPE-based particle smoother as well as the full intermediate quantity of EM. The results are obtained under, what we believe, minimal assumptions and may, since we analyse separately the GPE-based mutation step (Lemma~\ref{lem:particleconsistency}), be extended to any selection schedule for which consistency has been established in the literature. In this way, our GPEPS convergence results differ significantly from that presented in \cite{fearnhead:papaspiliopoulos:roberts:2008}. In the non-ergodic case, we proposed a method for sampling the joint smoothing distribution which is based on the forward-filtering backward-smoothing decomposition of the same. Basically, the method, which relies on an algorithm proposed by \citet{godsill:doucet:west:2004} and analysed further by \citet{douc:garivier:moulines:olsson:2009}, consists of a forward-filtering pass followed by a backward-simulation pass where trajectories are drawn according to approximations of the backward kernels obtained using the particle filter estimates obtained in the forward pass. During the two passes we replace, when needed, any evaluation of the diffusion process transition density by a draw from the GPE. At the end of the day, we obtain an $\mathcal{O}(N^2)$ algorithm that is significantly more costly than the fixed-lag smoother, but which avoids elegantly the problem of degeneracy of the genealogical tree of the particles. The methods were successfully demonstrated on two examples.

There exist alternative techniques, either Monte Carlo-based \citep[see \eg][]{pedersen:1995:theory} or based on basis expansions \citep{aitsahalia:2008}, for approximating the transition density. Nevertheless, none of these approaches produce unbiased estimates. The former is, while quite general, computationally very demanding and the latter is only valid for very short time intervals (recall that the performance of the GPE is independent of the size of the time grid). Sometimes more direct numerical approaches, such as solving the Fokker-Plank equations or taking the Fourier inverse of the characteristic function of the SDE, are possible; however, these methods often tend to be computationally expensive. Anyway, the theoretical results obtained by us presume only unbiasedness of the transition density estimator, and thus other approximation schemes may be applicable within our framework. 

\appendix

\section{Proofs}
\label{section:proofs}
The proofs of Proposition~\ref{prop:cons:GPEAPS} and Theorem~\ref{th:consistency} rely on recent results on limit theorems for weighted samples obtained by \cite{douc:moulines:2008}. Since we in this section deal exclusively with asymptotic properties of the sample as the sample size tends to infinity, we let, when not specified differently, the limit notation $\rightarrow$ refer to an \emph{increasing number} $N$ \emph{of particles} only. In addition, we let also the particles and the associated weights be indexed by $N$ for clearness. The following kernel notation will be useful in the following: Let $\mu$ be a measure on $(\partspace, \alg(\partspace))$, $f$ a measurable function on $(\partspacenew, \alg(\partspacenew))$, and $K$ a kernel from $(\partspace, \alg(\partspace))$ to $(\partspacenew, \alg(\partspacenew))$; then we set
\[
    \mu K(A) \define \int \mu(\ud \xi) \, K(\xi, A)
\]
and
\[
    K(\xi, f) \define \int f(\tilde{\xi}) \, K(\xi, \ud \tilde{\xi}) \eqsp.
\]
The following definition specifies the structure that we want any class of estimand functions to have.
\begin{definition}  \label{def:proper:set}
    A set $\cset$ of measurable functions on $\partspace$ is \emph{proper} if the following holds.
    \begin{itemize}
        \item[\emph{(i)}] $\cset$ is a linear space; that is, if $f$ and $g$ belong to $\cset$ and $(\alpha, \beta) \in \rr^2$, then $\alpha f + \beta g \in \cset$;
        \item[\emph{(ii)}] if $g \in \cset$ and $f$ is measurable with $|f| \leq |g|$, then $f \in \cset$;
        \item[\emph{(iii)}] for all $c \in \rr$, the constant function $\xi \mapsto c$ belongs to $\cset$.
    \end{itemize}
\end{definition}
We will frequently make use of the following lemma obtained by \citet[][]{douc:moulines:2008}. Let $(\Omega, \partalg, \mathbb{P})$ be a probability space and $(\partalg_{N, i})_{i = 0}^N$, $N \geq 1$, a triangular array of sub-$\sigma$-fields of $\partalg$ such that $\partalg_{N, i - 1} \subseteq \partalg_{N, i}$ for all $1 \leq i \leq N$ and $N \geq 1$. In addition, let $(\U{i})_{i = 1}^N$, $N \geq 1$, be a triangular array of random variables such that each $\U{i}$ is $\partalg_{N, i}$-measurable.
\begin{theorem}[\cite{douc:moulines:2008}]  \label{th:DoucMoulines} 
Assume that $\ee \left [|U_{N, j}| \vert \partalg_{N,j-1}\right ] < \infty$, $\pp$-a.s., for all $N \geq 1$ and $1 \leq j \leq N$. Suppose that
\begin{itemize}
    \item[{\it (i)}] as $\lambda \rightarrow \infty$,
    \begin{equation}
        \sup_{N \geq 1} \pp \left ( \sum_{j = 1}^N \ee \left[ \left. | \U{j} | \right| \partalg_{N, j - 1} \right]  \geq \lambda \right) \longrightarrow 0 \eqsp;
    \end{equation}
    \item[{\it (ii)}] in addition, for all $\epsilon > 0$,
    \begin{equation}
        \sum_{j = 1}^N \ee \left[  \left. |\U{j}| \condexp | \U{j} | \geq \epsilon \right| \partalg_{N,j - 1} \right ] \ConvP 0
    \end{equation}
\end{itemize}
as $N \rightarrow \infty$. Then
\begin{equation*}
    \max_{1\leq i \leq N} \left| \sum_{j = 1}^i \U{j} - \sum_{j = 1}^i  \ee \left[ \left. \U{j} \right| \partalg_{N, j - 1} \right] \right| \ConvP 0 \eqsp.
\end{equation*}
\end{theorem}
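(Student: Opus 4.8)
The plan is to prove this discrete-time weak law of large numbers for the centered partial-sum array
\[
    S_{N,i} \define \sum_{j=1}^i \left( \U{j} - \ee\left[\left. \U{j} \right| \partalg_{N,j-1}\right] \right) \eqsp,
\]
by the classical \emph{truncation} technique. The essential difficulty, and the reason a naive application of Doob's maximal inequalities fails, is that we are only given the conditional integrability $\ee[|\U{j}| \mid \partalg_{N,j-1}] < \infty$ a.s.\ and \emph{no} unconditional $\Lp{1}$ or $\Lp{2}$ bound; in particular $S_{N,i}$ need not be an honest integrable martingale in $i$. The probabilistic control that replaces the missing moment bounds is \emph{Lenglart's inequality}: if a nonnegative adapted process $X$ is dominated (in the sense of compensators) by a predictable nondecreasing process $A$, then $\pp(\max_i X_i \geq \delta) \leq \eta/\delta + \pp(A_N \geq \eta)$ for all $\delta, \eta > 0$. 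First I would fix a truncation level $c > 0$, split $\U{j} = \U{j}\indicator{|\U{j}| \leq c} + \U{j}\indicator{|\U{j}| > c}$, and thereby decompose $S_{N,i} = \bar{S}_{N,i}^{(c)} + \hat{S}_{N,i}^{(c)}$ into a bounded (truncated) part and a tail part; by the triangle inequality it then suffices to make $\max_{1\leq i\leq N}|\bar{S}_{N,i}^{(c)}|$ and $\max_{1\leq i\leq N}|\hat{S}_{N,i}^{(c)}|$ each small in probability.

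For the tail part I would use the crude bound
\[
    \max_{1\leq i\leq N}|\hat{S}_{N,i}^{(c)}| \leq \sum_{j=1}^N |\U{j}|\indicator{|\U{j}| > c} + \sum_{j=1}^N \ee\left[\left. |\U{j}|\indicator{|\U{j}| > c} \right| \partalg_{N,j-1}\right] \eqsp.
\]
The second sum tends to zero in probability for each fixed $c$ by hypothesis (ii). The first sum is a nonnegative nondecreasing adapted process whose predictable compensator is exactly the second sum, so Lenglart's inequality dominates it in probability by that compensator; hence the first sum vanishes as well, giving $\max_i|\hat{S}_{N,i}^{(c)}| \ConvP 0$ for every fixed $c > 0$.

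For the truncated part, $\bar{S}_{N,i}^{(c)}$ is a martingale in $i$ with increments bounded by $2c$, so it is genuinely square-integrable, and its predictable quadratic variation obeys
\[
    \langle \bar{S}^{(c)} \rangle_N \leq \sum_{j=1}^N \ee\left[\left. \U{j}^2\indicator{|\U{j}| \leq c} \right| \partalg_{N,j-1}\right] \leq c \sum_{j=1}^N \ee\left[\left. |\U{j}| \right| \partalg_{N,j-1}\right] \define c\, V_N \eqsp.
\]
Applying Lenglart's inequality to the submartingale $(\bar{S}_{N,i}^{(c)})^2$, whose compensator is $\langle \bar{S}^{(c)} \rangle$, gives
\[
    \pp\left( \max_{1\leq i\leq N} |\bar{S}_{N,i}^{(c)}| \geq \delta \right) \leq \frac{\eta}{\delta^2} + \pp\left( V_N \geq \eta/c \right) \eqsp,
\]
and this is exactly where hypothesis (i)---the uniform tightness of $V_N$---enters: given a prescribed error probability I would first pick $\lambda_0$ with $\sup_N \pp(V_N \geq \lambda_0)$ small, then set $\eta = c\lambda_0$, and finally choose $c$ small enough that $\eta/\delta^2 = c\lambda_0/\delta^2$ is also small.

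The hard part is to organise this two-parameter passage to the limit in the right order: the truncation level $c$ must be fixed (small) using \emph{only} the tightness (i), after which the Lindeberg-type condition (ii) is invoked with $N \to \infty$ at that frozen $c$; the standard $\delta/2$--$\delta/2$ split of the two maxima then combines the estimates. It is worth stressing once more why Lenglart, and not Doob's $\Lp{2}$ inequality, is indispensable: hypothesis (i) gives merely stochastic boundedness of $V_N$ and not a bound on $\ee[V_N]$, so no second-moment control of $\bar{S}_{N,N}^{(c)}$ is available, whereas Lenglart's inequality converts tightness of the compensator directly into tightness of the maximum.
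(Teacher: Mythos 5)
The paper does not prove this statement: it is quoted verbatim as a known result of \citet{douc:moulines:2008} (their generalised triangular-array law of large numbers) and used as a black box in the proofs of Lemma~\ref{lem:particleconsistency} and Theorem~\ref{th:consistency}, so there is no in-paper argument to compare against. Your truncation-plus-Lenglart proof is the standard route to this result and is essentially the argument given in the cited reference; I find no gap. The decomposition into a bounded martingale part controlled through its predictable bracket $\langle \bar{S}^{(c)}\rangle_N \leq c\,V_N$ together with the tightness hypothesis \emph{(i)}, and a tail part killed by the conditional Lindeberg condition \emph{(ii)} at the frozen truncation level, is exactly right, as is the order of quantifiers (choose $\lambda_0$ from \emph{(i)}, then $c$, then let $N \to \infty$). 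The one step you should justify explicitly is the ``clean'' discrete-time Lenglart bound $\pp(\max_i X_i \geq \delta) \leq \eta/\delta + \pp(A_N \geq \eta)$ without the usual jump term $\ee[\sup_i \Delta A_i]$: this holds here because the compensator $A$ is predictable, so $\tau = \inf\{i : A_i \geq \eta\}$ satisfies $\{\tau = i\} \in \partalg_{N,i-1}$, whence $A_{(\tau - 1)\wedge N} < \eta$ and Doob's inequality applied up to $(\tau-1)\wedge N$ yields the stated bound; as written, this justification is missing but the claim is correct.
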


\subsection{Proof of Proposition~\ref{prop:cons:GPEAPS}}
\label{section:proof:GPEAPS:conv}

Algorithm~\ref{alg:SMC:smoothing} is conveniently analysed within a more general framework of \emph{random weight mutation} (RWM).
Assume that we are given a $\partspace$-valued, weighted particle sample $(\parti{}{i}, \wgt{}{i})_{i = 1}^N$ which is consistent for some measure $\nu$ on $\alg(\partspace)$ and let $L$ be a finite transition kernel from $(\partspace, \alg(\partspace))$ to $(\partspacenew, \alg(\partspacenew))$. We wish to transform $(\parti{}{i}, \wgt{}{i})_{i = 1}^N$ into another sample $(\partinew{}{i}, \wgtnew{}{i})_{i = 1}^N$ targeting the measure
\begin{equation*}
    \mu(A) = \frac{\nu \ukvar(A)}{\nu \ukvar(\partspacenew)} \eqsp, \quad A \in \alg(\partspacenew) \eqsp,
\end{equation*}
by means of the RWM operation described below. The input parameters are: a proposal kernel $\proposal{}$ such that  $\proposal{}(\xi, \cdot)$ dominates $\ukvar(\xi, \cdot)$ for all $\xi \in \partspace$, a random weight kernel $\lemkern$ from $(\partspace \times \partspacenew, \alg(\partspace \times \partspacenew))$ to $(\rr^+, \alg(\rr^+))$ targeting $\dd \ukvar / \dd \proposal{}$ in the sense that, for all $(\xi, \tilde{\xi}) \in \partspace \times \partspacenew$,
\[
    \int v \, \lemkern(\xi, \tilde{\xi}, \dd v) = \frac{\dd \ukvar(\xi, \cdot)}{\dd \proposal{}(\xi, \cdot)}(\tilde{\xi}) \eqsp,
\]
and, finally, a Monte Carlo sample size $\alpha \in \N$.
\bigskip
\begin{algorithm}{RWM}{\label{alg:SMC:rw-mutation}
    \qcomment{random weight mutation}
    \qinput{$(\parti{}{i}, \wgt{}{i})_{i = 1}^N$,
    $\proposal{}$, $\lemkern$, $\alpha$}}
    \qfor $i \qlet 1$ \qto $N$ \\
    \qdo simulate $\partinew{}{i} \sim \proposal{}(\parti{}{i},\cdot)$; \\
    simulate $\qdrawnew{\parti{}{i},\partinew{}{i}}{1:\alpha}{} \sim
    \lemkern^{\varotimes \alpha}(\parti{}{i},\partinew{}{i}, \cdot)$; \\
    $\wgtnew{}{i} \qlet \wgt{}{i} \alpha^{-1} \sum_{\ell = 1}^{\alpha} \qdrawnew{\parti{}{i},\partinew{}{i}}{\ell}{}$; \qrof \\
    \qreturn $(\partinew{}{i}, \wgtnew{}{i})_{i = 1}^N$.
\end{algorithm}
\bigskip
The sample $(\partinew{}{i}, \wgtnew{}{i})_{i = 1}^N$
returned by the algorithm is taken as an approximation of $\mu$.
In order to evaluate the quality of this sample, define the set
\begin{equation} \label{eq:def:C:tilde}
    \csetnew \define \left\{ f \in \mathsf{L}^1(\mu, \partspacenew) :
    \ukvar(\cdot, |f|) \in \cset \right\} \eqsp; 
\end{equation}
then the following result stating consistency for weighted samples produced by Algorithm~\ref{alg:SMC:rw-mutation} is instrumental when establishing Proposition~\ref{th:consistency}.


\begin{lemma} \label{lem:particleconsistency}
    Assume the weighted sample $(\parti{}{i}, \wgt{}{i})_{i = 1}^N$ is consistent for $(\nu, \cset)$ and that the function $\ukvar(\cdot, \partspacenew)$ belongs to $\cset$. Then the set $\csetnew$ defined in \eqref{eq:def:C:tilde} and the weighted particle sample $(\partinew{}{i}, \wgtnew{}{i})_{i = 1}^N$ produced by Algorithm~\ref{alg:SMC:rw-mutation} are proper resp. $(\mu, \csetnew)$-consistent for any fixed $\alpha \in \N$.
\end{lemma}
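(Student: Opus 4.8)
The plan is to establish the two required properties of the output sample separately: first that the candidate set $\csetnew$ is proper, and second that $(\partinew{}{i}, \wgtnew{}{i})_{i = 1}^N$ is consistent for $(\mu, \csetnew)$. The properness of $\csetnew$ should be a routine verification against the three conditions of Definition~\ref{def:proper:set}, exploiting the linearity of $\ukvar(\cdot, |f|)$-type expressions and the assumption that $\cset$ itself is proper; in particular, condition (ii) of properness for $\csetnew$ follows from the monotonicity $|f| \leq |g| \implies \ukvar(\cdot, |f|) \leq \ukvar(\cdot, |g|)$ combined with property (ii) of $\cset$, and condition (iii) uses $\ukvar(\cdot, \partspacenew) \in \cset$.

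\medskip

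\noindent For the consistency statement, the strategy is to verify the two defining limits \eqref{eq:consistencyconvergence} and \eqref{eq:weightconvergence} of Definition~\ref{def:consistency} for the output sample by conditioning on the $\sigma$-field generated by the input sample. Fix $f \in \csetnew$. The key computation is that, conditionally on $(\parti{}{i}, \wgt{}{i})_{i = 1}^N$, the variables $\wgtnew{}{i} f(\partinew{}{i})$ are independent across $i$ with conditional expectation
\[
    \ee\!\left[ \left. \wgtnew{}{i} f(\partinew{}{i}) \right| \mathcal{F}_N \right] = \wgt{}{i} \int f(\tilde\xi)\, \frac{\dd \ukvar(\parti{}{i}, \cdot)}{\dd \proposal{}(\parti{}{i}, \cdot)}(\tilde\xi)\, \proposal{}(\parti{}{i}, \dd\tilde\xi) = \wgt{}{i}\, \ukvar(\parti{}{i}, f) \eqsp,
\]
where the unbiasedness of the random weight kernel $\lemkern$ (averaged over the $\alpha$ draws) cancels exactly against the proposal density, reducing the conditional mean to an integral against $\ukvar$. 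Summing over $i$ and normalising, the numerator average $\wgtsum{}^{-1} \sum_i \wgt{}{i}\, \ukvar(\parti{}{i}, f)$ converges in probability to $\nu\ukvar(f)$ by the input consistency applied to the function $\ukvar(\cdot, f) \in \cset$ (which holds precisely because $f \in \csetnew$). Taking the ratio of numerator and denominator (the latter with $f \equiv 1$, giving $\nu\ukvar(\partspacenew)$) yields $\mu(f)$ after normalisation.

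\medskip

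\noindent The bridge from these conditional means to the actual self-normalised sums is where Theorem~\ref{th:DoucMoulines} does the work: I would apply it to the triangular array $\U{i} \define \wgtsum{}^{-1} \wgt{}{i} f(\partinew{}{i})$ (suitably centred) with the natural filtration interpolating between the input $\sigma$-field and the successive proposal/weight draws, to deduce that the difference between the empirical sum and its conditional-mean sum vanishes in probability. The maximal-weight condition \eqref{eq:weightconvergence} for the output follows analogously by controlling $\wgtnew{}{i}$ through the input weights and an $\Lp{1}$ bound on the averaged random weights. \textbf{The main obstacle} I anticipate is verifying the Lindeberg-type condition (ii) of Theorem~\ref{th:DoucMoulines}—the triangular array has weights that are themselves random (through the $\alpha$ GPE draws $\qdrawnew{\parti{}{i},\partinew{}{i}}{\ell}{}$), so one must argue that these random weights, after averaging over $\ell$, do not produce heavy tails that break the uniform negligibility; this is where the $\Lp{1}$ integrability hypotheses packaged into $f \in \csetnew$ and $\ukvar(\cdot,\partspacenew) \in \cset$ must be used carefully, and where the fact that $\alpha$ is fixed (rather than growing) matters for keeping the conditional variances under control.
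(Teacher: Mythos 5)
Your proposal follows essentially the same route as the paper's proof: the same conditional-expectation computation reducing $\ee[\wgtnew{}{i} f(\partinew{}{i}) \mid \mathcal{F}_N]$ to $\wgt{}{i}\ukvar(\parti{}{i},f)$, the same appeal to input consistency for $\ukvar(\cdot,|f|)\in\cset$, Slutsky for the self-normalisation, and Theorem~\ref{th:DoucMoulines} applied to the triangular array $\wgtnew{}{i} f(\partinew{}{i})/\wgtsum{}$ (your array should carry the new weight $\wgtnew{}{i}$, not $\wgt{}{i}$, and needs no centring since the theorem itself compares the sum to its conditional-mean sum). The obstacle you flag in the Lindeberg condition is resolved in the paper exactly as you anticipate: one splits on the event $\{C\max_i\wgt{}{i}\leq\epsilon\wgtsum{}\}$, bounds the truncated sum by a $\cset$-function of the ancestors, passes to the limit by input consistency, and lets $C\to\infty$ via dominated convergence; the same truncation device handles the maximal-weight condition \eqref{eq:weightconvergence}.
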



\begin{proof}
Properness of the set $\csetnew$ is straightforwardly established: To check Property (i) in Definition~\ref{def:proper:set}, suppose that $f$ and $g$ belong to $\csetnew$ and let $(\alpha, \beta) \in \rr^2$; then
\begin{multline*}
    \iint |\alpha f(\tilde{\xi}) + \beta g(\tilde{\xi})| v \, \lemkern(\cdot, \tilde{\xi}, \dd v) \, \proposal{}(\cdot, \dd \tilde{\xi}) \\
    \leq |\alpha| \iint |f(\tilde{\xi})| v \, \lemkern(\cdot, \tilde{\xi}, \dd v) \, \proposal{}(\cdot, \dd \tilde{\xi}) \\
    + |\beta| \iint |g(\tilde{\xi})| v \, \lemkern(\cdot, \tilde{\xi}, \dd v) \, \proposal{}(\cdot, \dd \tilde{\xi}) \\
    = |\alpha| \ukvar(\cdot, |f|) + |\beta| \ukvar(\cdot, |g|) \eqsp,
\end{multline*}
where the function on the right hand side belongs to $\cset$ by construction of $\csetnew$ and the fact that $\cset$ is a linear space. That the integral on the left hand side belongs to $\cset$ is now a consequence of Property~(ii) in Definition~\ref{def:proper:set}. Properties (ii) and (iii) are checked in a similar manner.

To establish Condition~\eqref{eq:consistencyconvergence} in Definition~\ref{def:consistency} it is enough to show that, for all $f \in \csetnew$,
\begin{equation} \label{eq:key:limit-1}
    \wgtsum{}^{-1} \sum_{i = 1}^N \wgtnew{}{i} f(\partinew{}{i})
    \ConvP \nu \ukvar(f) \eqsp;
\end{equation}
indeed, since $\csetnew$ contains the unity mapping $\tilde{\xi} \mapsto 1$ (as $\csetnew$ is proper), \eqref{eq:key:limit-1} implies that
\begin{equation} \label{eq:weight:limit}
    \wgtsum{}^{-1} \sum_{i = 1}^N \wgtnew{}{i} \ConvP \nu L(\partspacenew) \eqsp,
\end{equation}
from which Condition~\eqref{eq:consistencyconvergence} in Definition~\ref{def:consistency}
follows by Slutsky's lemma. Thus, we define the triangular array  $\U{i} \define
\wgtnew{}{i} f(\partinew{}{i}) / \wgtsum{}$, $N \geq 1$, $1 \leq i \leq N$, and sub-$\sigma$-fields $\partalg_N \define \sigma\{ (\parti{}{i}, \wgt{}{i})_{i = 1}^N \}$, $N \geq 1$. We then get, by applying the tower property of conditional expectations and the consistency of the ancestor sample,
\begin{multline*}
    \sum_{i = 1}^N \ee \left[ \left. \U{i} \right| \partalg_N \right] \\
    = \wgtsum{}^{-1} \sum_{i = 1}^N \wgt{}{i} \ee \Bigg[ \Bigg.
    \ee \Bigg[ \Bigg. \alpha^{-1} \sum_{\ell = 1}^{\alpha}
    \qdrawnew{\parti{}{i},\partinew{}{i}}{\ell}{} \Bigg| \partinew{}{i},
    \partalg_N \Bigg] f(\partinew{}{i}) \Bigg| \partalg_N \Bigg] \\
    = \wgtsum{}^{-1} \sum_{i = 1}^N  \wgt{}{i}
    \int f(\tilde{\xi}) \int v \, \lemkern (\parti{}{i}, \tilde{\xi},\dd v) \,
    \proposal{} (\parti{}{i},\dd \tilde{\xi}) \\
    = \wgtsum{}^{-1} \sum_{i = 1}^N  \wgt{}{i} L(\parti{}{i}, f)
    \ConvP \nu \ukvar(f) \eqsp,
\end{multline*}
since $L(\cdot, f) \leq L(\cdot, |f|) \in \cset$. To show
that $\sum_{i = 1}^N \U{i}$ tends to $\sum_{i = 1}^N
\ee[\U{i} \vert \partalg_N]$ in probability, implying \eqref{eq:key:limit-1}, we apply
Theorem~\ref{th:DoucMoulines}. In order to
establish the first condition of that theorem we reuse the arguments above and use that $\ukvar(\cdot, |f|) \in \cset$, yielding the limit
\begin{equation*}
    \sum_{i = 1}^N \ee \left[ \left. |\U{i}| \right| \partalg_N \right]
    \ConvP \nu \ukvar(|f|) \eqsp.
\end{equation*}
Now, since convergence in probability implies
tightness, we conclude that Condition~(i) in Theorem~\ref{th:DoucMoulines} is fulfilled.

To verify (ii), define, for some $\epsilon > 0$, $A_N \define \sum_{i = 1}^N \ee[ |\U{i}| \condexp |\U{i}| \geq \epsilon | \partalg_N ]$. Since, as the ancestor sample is assumed to be consistent, $\max_{1 \leq i \leq N} \wgt{}{i} / \wgtsum{}$ vanishes in probability as $N$ tends to infinity, the same holds for the product $A_N \mathbbm{1} \{C \max_{1 \leq i \leq N} \wgt{}{i} > \epsilon \wgtsum{} \}$, where $C > 0$ is an arbitrary constant. On the other hand,
\begin{multline*}
    A_N \mathbbm{1} \left\{ C \max_{1 \leq i \leq N} \wgt{}{i}
    \leq \epsilon \wgtsum{} \right\} \\
    \leq \sum_{i = 1}^N \ee \left[ \left. |\U{i}| \condexp |f(\partinew{}{i})| \sum_{\ell = 1}^\alpha
    \qdrawnew{\parti{}{i}, \partinew{}{i}}{\ell}{} \geq \alpha C \right|  \partalg_{N} \right] \\
    = \wgtsum{}^{-1} \sum_{i = 1}^N \wgt{}{i} \int
    |f(\tilde{\xi})| \int_{ |f(\tilde{\xi})| \sum_{\ell = 1}^\alpha v_\ell \geq \alpha C } v_1
    \lemkern^{\varotimes \alpha}\myleft(\parti{}{i}, \tilde{\xi}, \dd v_{1:\alpha} \myright) \, \proposal{} \myleft( \parti{}{i}, \dd \tilde{\xi} \myright) \eqsp.
\end{multline*}
Now, since, for all $\xi \in \partspace$,
$$
    \int |f(\tilde{\xi})| \int_{ |f(\tilde{\xi})| \sum_{\ell = 1}^\alpha v_\ell \geq \alpha C } v_1
    \lemkern^{\varotimes \alpha}(\xi, \tilde{\xi}, \dd v_{1:\alpha}) \, \proposal{}(\xi, \dd \tilde{\xi}) \leq \ukvar(\xi, |f|) \eqsp,
$$
where $\ukvar(\cdot, |f|) \in \cset$, we conclude, using Property~(ii) of Definition~\ref{def:proper:set}, that the mapping
$$
\xi \mapsto \int |f(\tilde{\xi})| \int_{ |f(\tilde{\xi})| \sum_{\ell = 1}^\alpha v_\ell \geq \alpha C } v_1
    \lemkern^{\varotimes \alpha}(\xi, \tilde{\xi}, \dd v_{1:\alpha}) \, \proposal{}(\xi, \dd \tilde{\xi})
$$
on $\partspace$ belongs to $\cset$ as well. Thus, consistency of the ancestor sample implies that
\begin{multline} \label{eq:first:condition:limit}
    \sum_{i = 1}^N \ee \left[ \left. |\U{i}| \condexp |f(\partinew{}{i})| \sum_{\ell = 1}^\alpha \qdrawnew{\parti{}{i}, \partinew{}{i}}{\ell}{} \geq \alpha C \right| \partalg_N \right] \\
    \ConvP \iint |f(\tilde{\xi})| \int_{ |f(\tilde{\xi})| \sum_{\ell = 1}^\alpha v_\ell \geq \alpha C } v_1
    \lemkern^{\varotimes \alpha}(\xi, \tilde{\xi}, \dd v_{1:\alpha}) \, \proposal{}(\xi, \dd \tilde{\xi}) \, \nu(\xi) \eqsp.
\end{multline}
In addition, since the constant $C$ may be chosen arbitrarily large, the limit \eqref{eq:first:condition:limit} can be made arbitrarily small by the dominated convergence theorem. We hence conclude that $A_N$ tends to zero in probability as $N$ tends to infinity. This establishes \eqref{eq:key:limit-1}.

In order to establish \eqref{eq:weightconvergence} it is, by Slutsky's theorem and \eqref{eq:weight:limit}, enough to prove that
\begin{equation} \label{eq:max:key:limit}
    \wgtsum{}^{-1} \max_{1 \leq i \leq N} \wgtnew{}{i} \ConvP 0 \eqsp.
\end{equation}
Thus, take again a constant $C > 0$ and write
\begin{multline} \label{eq:cond:max:limit}
    \wgtsum{}^{-1} \max_{1 \leq i \leq N} \wgtnew{}{i} \mathbbm{1} \left\{
    \sum_{\ell = 1}^{\alpha} V^\ell \myleft(\parti{}{i}, \partinew{}{i}\myright) \geq \alpha C \right\} \\
    \leq \wgtsum{}^{-1} \sum_{i = 1}^N \wgtnew{}{i} \mathbbm{1} \left\{
    \sum_{\ell = 1}^{\alpha} \qdrawnew{\parti{}{i}, \partinew{}{i}}{\ell}{} \geq \alpha C \right\} \eqsp.
\end{multline}
To prove that the right hand side of \eqref{eq:cond:max:limit} converges, we introduce the triangular array $\Up{i} \define \wgtnew{}{i} \mathbbm{1}\{ \sum_{\ell = 1}^{\alpha} \qdrawnew{\parti{}{i}, \partinew{}{i}}{\ell}{} \geq \alpha C \} / \wgtsum{}$, $N \geq 1$, $1 \leq i \leq N$, and let the sub-$\sigma$-fields $\partalg_N$, $N \geq 1$, be defined as above. Next, we use again Theorem \ref{th:DoucMoulines}. To verify the first condition, take conditional expectation with respect to $\partalgp_N$ and reuse \eqref{eq:first:condition:limit} with $f$ being the unity function; this yields
\begin{equation*}
    \sum_{i = 1}^N \ee \left[  \left. \Up{i} \right| \partalgp_N \right]
    \ConvP \iiint_{\, \sum_{\ell = 1}^\alpha v_\ell \geq \alpha C } v_1 \lemkern^{\varotimes \alpha}(\xi, \tilde{\xi}, \dd v_{1:\alpha}) \, \proposal{}(\xi, \dd \tilde{\xi}) \, \nu(\dd \xi) \eqsp,
\end{equation*}
implying (i). To verify (ii), take an $\epsilon > 0$ and define $A_N \define \sum_{i = 1}^N \ee[ |\Up{i}| \condexp |\Up{i}| \geq \epsilon | \partalg_N ]$. Then
\begin{equation*}
    A_N = \wgtsum{}^{-1} \sum_{i = 1}^N \wgt{}{i} \ee \left[ \left. V^1(\parti{}{i}, \partinew{}{i}) \condexp \wgtnew{}{i} \geq \epsilon \wgtsum{}, \,  \sum_{\ell = 1}^{\alpha} V^\ell \myleft(\parti{}{i}, \partinew{}{i}\myright) \geq \alpha C \right| \partalgp_N \right] \eqsp,
\end{equation*}
implying that, for an arbitrary constant $C' > 0$, following the lines of \eqref{eq:first:condition:limit},
\begin{multline} \label{eq:last:C:limit}
    A_N \mathbbm{1} \left\{ C' \max_{1 \leq i \leq N} \wgt{}{i} \leq \epsilon \wgtsum{} \right\} \\
    \leq \wgtsum{}^{-1} \sum_{i = 1}^N \wgt{}{i} \ee \left[ \left. V^1(\parti{}{i}, \partinew{}{i}) \condexp \sum_{\ell = 1}^{\alpha} V^\ell \myleft(\parti{}{i}, \partinew{}{i}\myright) \geq \alpha (C \vee C') \right| \partalgp_N \right] \\
    \ConvP \iiint_{\, \sum_{\ell = 1}^\alpha v_\ell \geq \alpha (C \vee C') } v_1 \lemkern^{\varotimes \alpha}(\xi, \tilde{\xi}, \dd v_{1:\alpha}) \, \proposal{}(\xi, \dd \tilde{\xi}) \, \nu(\dd \xi) \eqsp.
\end{multline}
On the other hand,
\begin{equation*}
    \wgtsum{}^{-1} \max_{1 \leq i \leq N} \wgtnew{}{i}
    \mathbbm{1} \left\{ \sum_{\ell = 1}^{\alpha}
    \qdrawnew{\parti{}{i}, \partinew{}{i}}{\ell}{} <  \alpha C \right\}
    \leq C \wgtsum{}^{-1} \max_{1 \leq i \leq N} \wgt{}{i} \ConvP 0 \eqsp.
\end{equation*}
Thus, since the limit \eqref{eq:last:C:limit} can be made arbitrarily small by increasing $C'$, we conclude that $A_N$ tends to zero as $N$ tends to infinity. This in turn implies that the upper bound in \eqref{eq:cond:max:limit} tends to
\begin{equation} \label{eq:very:last:limit}
    \iiint_{\, \sum_{\ell = 1}^\alpha v_\ell \geq \alpha C } v_1 \lemkern^{\varotimes \alpha}(\xi, \tilde{\xi}, \dd v_{1:\alpha}) \, \proposal{}(\xi, \dd \tilde{\xi}) \, \nu(\dd \xi) \eqsp.
\end{equation}
Finally, we complete the proof by noting that \eqref{eq:very:last:limit} can be made arbitrarily small by increasing $C$. 
\end{proof}


We now use Lemma~\ref{lem:particleconsistency} to prove consistency of Monte Carlo estimates produced by the \GPEAPS. For this purpose, let $\selpart{0:k|k}{i} \define \parti{0:k|k}{\idx{k}{i}}$, $1 \leq i \leq N$, denote the selected particles obtained in Step~(2) of Algorithm~\ref{alg:SMC:smoothing}. Consequently, the sample $(\selpart{0:k|k}{i})_{i = 1}^N$ is obtained by resampling the ancestor particles
$(\parti{0:k|k}{i})_{i = 1}^N$ multinomially with respect to the normalised adjusted weights $(\wgt{k}{j} \adj{k}{j} / \sum_{\ell = 1}^N \wgt{k}{\ell} \adj{k}{\ell})_{j = 1}^N$. This operation will in the following be referred to as \emph{selection}. Using this notation and terminology it is now possible to describe one iteration of the \GPEAPS\ by the following three transformations:
\begin{multline*} \label{eq:updating:scheme}
    (\parti{0:k|k}{i}{}, \wgt{k}{i})_ {i = 1}^N \overpil{\textbf{I}: Weighting}
    (\parti{0:k|k}{i}{}, \adj{k}{i} \wgt{k}{i})_{i = 1}^N \rightarrow \\
    \overpil{\textbf{II}: Selection} (\selpart{0:k|k}{i}{},1)_{i = 1}^N \overpil{\textbf{III}: Mutation}
    (\parti{0:k + 1|k + 1}{i}{}, \wgt{k+1}{i})_{i = 1}^N \eqsp.
\end{multline*}
Here the third operation refers to the random weight mutation procedure described in Algorithm~\ref{alg:SMC:rw-mutation}.

To prove Proposition~\ref{prop:cons:GPEAPS} we proceed by induction and assume that $(\parti{0:k|k}{i}{},\wgt{k}{i})_ {i = 1}^N$ is consistent for $(\sm{k}, \Lp{1}(\Xset^{k + 1}, \sm{k}))$. Next, we show how consistency is preserved through one iteration of the algorithm by analysing separately Steps~({\bf I}--{\bf III}).

\emph{Step {\bf I}}. Define the modulated smoothing measure
\[
\modmeas{k}(A) \define \frac{\sm{k}(\adjfunc{k} \mathbbm{1}_A)}{\sm{k}(\adjfunc{k})} \eqsp, \quad A \in \Xalg^{\varotimes(n + 1)} \eqsp;
\]
then the weighting operation in Step~{\bf I} can be viewed as a transformation according Algorithm~\ref{alg:SMC:rw-mutation} with $\partspace = \Xset^{n + 1}$, $\partspacenew = \Xset^{n + 1}$, and
    \[
    \begin{cases}
        \nu = \sm{k} \eqsp, \\
        \mu = \modmeas{k} \eqsp, \\
        \proposal{}(x_{0:k}, A) = \delta_{x_{0:k}}(A) \eqsp, \\
        \ukvar(x_{0:k}, A) =  \adjfunc{k}(x_{0:k}) \, \delta_{x_{0:k}}(A) \eqsp, \\
        \lemkern(x_{0:k}, x_{0:k}', A) =  \delta_{\adjfunc{k}(x_{0:k}')}(A) \eqsp.
    \end{cases}
    \]
Thus, by applying Lemma~\ref{lem:particleconsistency} we conclude that $(\parti{0:k|k}{i}{}, \adj{k}{i} \wgt{k}{i})_{i = 1}^N$ is consistent for $\modmeas{k}$ and the (proper) set
\begin{equation*}
    \left\{ f \in \Lp{1}(\modmeas{k}, \Xset^{n + 1}) : \adjfunc{k}|f| \in \Lp{1}(\sm{k}, \Xset^{n + 1}) \right\} = \Lp{1}(\modmeas{k}, \Xset^{n + 1}) \eqsp.
\end{equation*}

\emph{Step {\bf II}}. Applying Theorem~3 in \cite{douc:moulines:2008} gives immediately that $(\selpart{0:k|k}{i}{},1)_{i = 1}^N$ is consistent for $[\modmeas{k}, \Lp{1}(\modmeas{k}, \Xset^{n + 1})]$ for both the selection schedules \eqref{eq:mult:selection} and \eqref{eq:deter:plus:res:selection}.

\emph{Step {\bf III}}. Also the third step is handled using Lemma~\ref{lem:particleconsistency}. In this case, we set $\partspace = \Xset^{n + 1}$, $\partspacenew = \Xset^{n + 2}$, and
\[
    \begin{cases}
        \nu = \modmeas{k} \eqsp, \\
        \mu = \sm{k + 1} \eqsp, \\
        \proposal{}(x_{0:k}, A) = \int_A \delta_{x_{0:k}}(\ud x_{0:k}') \,
        \proposal{k}(x_k', \ud x_{k + 1}') \eqsp, \\
        \ukvar(x_{0:k}, A) = \int_A \wgtfunc{k}(x_{0:k + 1}') \, \delta_{x_{0:k}}(\ud x_{0:k}') \,
        \proposal{k}(x_k', \ud x_{k + 1}') \eqsp, \\
        \lemkern(x_{0:k}, x_{0:k + 1}', A) \\ \qquad = \int \mathbbm{1}_A \{ v g(x_{k + 1}', Y_{k + 1}) / [\adjfunc{k}(x_{0:k}') \propdens{k}(x_k', x_{k + 1}')] \} \qkern(x_k', x_{k + 1}', \dd v) \eqsp,
    \end{cases}
\]
where $\qkern$ is the GPE described in Section~\ref{section:GPEs} (and in more detail in Appendix~\ref{section:appendix:EA}). Thus, using Lemma~\ref{lem:particleconsistency} yields that $(\parti{0:k + 1|k + 1}{i}{},\wgt{k + 1}{i})_{i = 1}^N$ is consistent for $\sm{k + 1}$ and the set
\begin{equation*}
    \left\{ f \in \Lp{1}(\sm{k + 1}, \Xset^{k + 2}) : \ukvar{}(\cdot, |f|) \in \Lp{1}(\modmeas{k}, \Xset^{n + 1}) \right\} 
    = \Lp{1}(\sm{k + 1}, \Xset^{k + 2}) \eqsp.
\end{equation*}
Finally, we complete the proof by noting that the induction hypothesis is fulfilled for $k = 0$ by assumption.

\subsection{Proof of Theorem~\ref{th:consistency}}
\label{section:proof:Q:conv}

Decompose the error according to
\begin{multline} \label{Eq:DecompError}
    \IMQ^N_n(\theta, \theta') - \IMQ_n(\theta, \theta') \\
    = \sum_{k = 0}^{n - 1} \left[ \left( \Omega_{k(\Delta_n)}^{N, \theta'} \right)^{-1}
    \sum_{i = 1}^N \wgt{k(\Delta_n)}{i, \theta'} s_k^{\bar{\alpha}}
    \left( \parti{k:k + 1|k(\Delta_n)}{i, \theta'}, \theta \right) \right. \\
    \Bigg. - \int s_k(x_{k:k + 1} ; \theta) \, \sm{k(\Delta_n)} \left( \ud x_{k:k + 1} ; \theta' \right) \Bigg] \\
    + b_n(\Delta_n, \theta, \theta') \eqsp,
\end{multline}
where the bracket terms are errors originating from the \GPEAPS\ and the second term $b_n$, defined in \eqref{eq:def:bias}, is the cost of introducing the fixed lag. By combining Proposition~\ref{prop:cons:GPEAPS} with Slutsky's theorem we conclude that
\begin{multline} \label{eq:first:term:conv}
    \sum_{k = 0}^n \left( \Omega_{k(\Delta_n)}^{N, \theta'} \right)^{- 1}
    \sum_{i = 1}^N \wgt{k(\Delta_n)}{i, \theta'}
    \log g_\theta \left( \parti{k | k(\Delta_n)}{i, \theta'}, Y_k \right)
    \\ \ConvP \sum_{k = 0}^n \int \log g_\theta \left(x_k, Y_k \right) \, \sm{k(\Delta_n)}(\dd x_k ; \theta') \eqsp,
\end{multline}
as $x_{0:k(\Delta_n)} \mapsto \log g_\theta(x_k, Y_k)$ belongs to $\Lp{1}(\sm{k(\Delta_n)}(\cdot; \theta'), \Xset^{k(\Delta_n) + 1})$ by assumption. Thus, the second term of the intermediate quantity estimator \eqref{eq:def:Q:est} is consistent. In order to establish consistency of the complete estimator it remains to prove that
\begin{multline} \label{eq:second:term:conv}
\sum_{k = 0}^{n - 1} \left( \bar{\alpha} \Omega_{k(\Delta_n)}^{N, \theta'} \right)^{- 1}
    \sum_{i = 1}^N \wgt{k(\Delta_n)}{i, \theta'} \sum_{\ell = 1}^{\bar{\alpha}} \bar{V}_\theta^\ell \left( \parti{k:k + 1|k(\Delta_n)}{i, \theta'} \right) 
    \\ \ConvP \sum_{k = 0}^{n - 1} \int \log q_\theta\left(x_k, x_{k + 1}\right) \, \sm{k(\Delta_n)}(\dd x_{k:k + 1} ; \theta') \eqsp.
\end{multline}
To do this, we define $\Utd{i} \define \wgt{k(\Delta_n)}{i, \theta'} \sum_{\ell = 1}^{\bar{\alpha}} \bar{V}_\theta^\ell (\parti{k:k + 1|k(\Delta_n)}{i, \theta'}) / \bar{\alpha} \Omega_{k(\Delta_n)}^{N, \theta'}$ and $\partalgtd_N \define \sigma\{ (\parti{0:k(\Delta_n)|k(\Delta_n)}{i, \theta'}, \wgt{k(\Delta_n)}{i, \theta'})_{i = 1}^N \}$ and appeal to Theorem~\ref{th:DoucMoulines} and Proposition~\ref{prop:cons:GPEAPS}. Since $\log q_\theta(x_k, x_{k + 1}) \leq \int |v| \, \lqkern_\theta(x_k, x_{k + 1}, \dd v)$ for all $x_{k:k + 1} \in \Xset^2$, the mapping $x_{0:k(\Delta_n)} \mapsto \log q_\theta(x_k, x_{k + 1})$ belongs to $\Lp{1}(\sm{k(\Delta_n)}(\cdot; \theta'), \Xset^{k(\Delta_n) + 1})$. Hence,
\begin{multline} \label{eq:key:limit-2}
    \sum_{i = 1}^N \ee \left[ \Utd{i} \left| \partalgtd_N \right. \right]
    = \left( \Omega_{k(\Delta_n)}^{N, \theta'} \right)^{-1}
    \sum_{i = 1}^N \wgt{k(\Delta_n)}{i, \theta'} \log q_\theta\left( \parti{k:k + 1|k(\Delta_n)}{i, \theta'} \right) 
    \\ \ConvP \int \log q_\theta(x_k, x_{k + 1}) \, \sm{k(\Delta_n)}(\dd x_{k:k + 1} ; \theta') \eqsp,
\end{multline}
from which we conclude that \eqref{eq:second:term:conv} may be established by verifying the two assumptions of Theorem~\ref{th:DoucMoulines}. Following \eqref{eq:key:limit-2} and using again that $x_{0:k(\Delta_n)} \mapsto \int |v| \lqkern_\theta(x_k, x_{k + 1}, \dd v)$ belongs to $\Lp{1}(\sm{k(\Delta_n)}(\cdot; \theta'), \Xset^{k(\Delta_n) + 1})$ by assumption, we conclude that
\begin{equation*}
    \sum_{i = 1}^N \ee \left[ \left| \Utd{i} \right| \left| \partalgtd_N \right. \right] \ConvP \iint |v| \, \lqkern_\theta(x_k, x_{k + 1}, \dd v) \, \sm{k(\Delta_n)}(\dd x_{k:k + 1} ; \theta') \eqsp,
\end{equation*}
which verifies Assumption~(i) (by tightness of sequences converging in probability). To verify (ii), let $\epsilon > 0$ and set $\bar{A}_N \define \sum_{i = 1}^N \ee[ |\Utd{i}| \condexp |\Utd{i}| \geq \epsilon | \partalgtd_N ]$. Then, for any constant $C > 0$, by consistency of the particle sample,
\begin{equation}
    \bar{A}_N \mathbbm{1}\left\{ C \max_{1 \leq i \leq N} \wgt{k(\Delta_n)}{i, \theta'} > \epsilon \Omega_{k(\Delta_n)}^{N, \theta'} \right\} \ConvP 0 \eqsp.
\end{equation}
On the other hand,
\begin{multline*}
    \bar{A}_N \mathbbm{1}\left\{ C \max_{1 \leq i \leq N} \wgt{k(\Delta_n)}{i, \theta'} \leq \epsilon \Omega_{k(\Delta_n)}^{N, \theta'} \right\} \\
    \leq \sum_{i = 1}^N \ee \left[ \left. |\Utd{i}| \condexp \left| \sum_{\ell = 1}^{\bar{\alpha}} \bar{V}_\theta^\ell \left( \parti{k:k + 1|k(\Delta_n)}{i, \theta'} \right) \right| \geq C \bar{\alpha} \right| \partalgtd_N \right] \\
    \leq \left( \Omega_{k(\Delta_n)}^{N, \theta'} \right)^{-1} \sum_{i = 1}^N \wgt{k(\Delta_n)}{i, \theta'} \int_{\, | \sum_{\ell = 1}^{\bar{\alpha}} v_\ell | \geq C \bar{\alpha}} |v_1| \, \lqkern_\theta^{\varotimes \bar{\alpha}}(x_k, x_{k + 1}, \ud v_{1 : \bar{\alpha}}) \eqsp.
\end{multline*}
Now, since, for all $x_{k:k + 1} \in \Xset^2$,
$$
    \int_{\, | \sum_{\ell = 1}^{\bar{\alpha}} v_\ell | \geq C \bar{\alpha}} |v_1| \, \lqkern_\theta^{\varotimes \bar{\alpha}}(x_k, x_{k + 1}, \ud v_{1 : \bar{\alpha}}) \leq \int |v| \, \lqkern_\theta(x_k, x_{k + 1}, \ud v) \eqsp,
$$
we get, using Proposition~\ref{prop:cons:GPEAPS},
\begin{multline} \label{eq:final:epsilon:limit}
\left( \Omega_{k(\Delta_n)}^{N, \theta'} \right)^{-1} \sum_{i = 1}^N \wgt{k(\Delta_n)}{i, \theta'} \int_{\, | \sum_{\ell = 1}^{\bar{\alpha}} v_\ell | \geq C \bar{\alpha}} |v_1| \, \lqkern_\theta^{\varotimes \bar{\alpha}}(x_k, x_{k + 1}, \ud v_{1 : \bar{\alpha}}) \\
\ConvP \iint_{\, | \sum_{\ell = 1}^{\bar{\alpha}} v_\ell | \geq C \bar{\alpha}} |v_1| \, \lqkern_\theta^{\varotimes \bar{\alpha}}(x_k, x_{k + 1}, \ud v_{1 : \bar{\alpha}}) \, \sm{k(\Delta_n)}(\ud x_{k:k + 1}; \theta') \eqsp.
\end{multline}
We now note that the limit in \eqref{eq:final:epsilon:limit} can be made arbitrarily small by increasing $C$. This verifies condition (ii) in Theorem~\ref{th:DoucMoulines}, which completes the proof of \eqref{eq:second:term:conv}. Finally, combining \eqref{eq:second:term:conv} with \eqref{eq:first:term:conv} completes the proof of Theorem~\ref{th:consistency}.

\section{More on the GPE}
\label{section:appendix:EA}
The outline of this section follows \citet{beskos:papaspiliopoulos:roberts:fearnhead:2006} and \citet{fearnhead:papaspiliopoulos:roberts:2008}, and we limit our scope to the one-dimensional case; multivariate extensions are treated by \citet{beskos:papaspiliopoulos:roberts:2008}. Let $(C[0,t], \mathcal{C}[0, t])$ be the measurable space of continuous functions on $[0, t]$ and denote by $\ss{x}_\theta$ the law of $\Xnew{}$ on $(C[0,t], \mathcal{C}[0, t])$ for the initial condition $\Xnew{0} = W_0 = x$. Also, let $\bb{t, x, x'}$ be the law, on the same space, of the Brownian bridge process $\path = (\path_s)_{0 \leq s \leq t}$ starting in $x$ at time zero and ending in $x'$ at time $t$. Similarly, denote by $\ss{t, x, x'}_\theta$ the law of the \emph{diffusion bridge} obtained when $\Xnew{}$ is conditioned to start at $\Xnew{0} = W_0 = x$ and to finish at $\Xnew{t} = x'$. Recall the definition \eqref{eq:def:alpha} of $\alpha(\cdot, \theta)$ and let
\begin{equation*}
    A(u, \theta) \define \int^u \alpha(v, \theta) \, \ud v
\end{equation*}
be any antiderivative of $\alpha(\cdot, \theta)$. The role of Assumptions {\it (A\ref{hyp:Beskos:C0}--A\ref{hyp:Beskos:C2})} is to guarantee that $\ss{t, x, x'}_\theta$ is absolutely continuous with respect to $\bb{t, x, x'}$ with Radon-Nikodym derivative
\begin{multline} \label{Eq:RN_ds/dw:bridges}
    \frac{\dd \ss{x, x', t}_\theta}{\dd \ww{x, x', t}}(w) \\
    = \frac{\normpdf_t(x' - x)}{\tilde{q}_\theta(x, x', t)}
    \exp \left(  A(x', \theta) - A(x, \theta)
    - \frac{1}{2} \int_0^t (\alpha^2 + \alpha')(w_s, \theta) \,
    \dd s \right) \eqsp,
\end{multline}
where $w \in C[0, t]$ and $\normpdf_t$ denotes the density function of the zero mean normal distribution with variance $t$. Now, define, for $u \in \rr$, the \emph{drift functional}
\begin{equation*}
    \phi(u, \theta) \define \frac{\alpha^2(u, \theta) + \alpha'(u, \theta)}{2} - l(\theta) \eqsp,
\end{equation*}
where $l(\theta)$ is the lower bound given in Assumption~\refhyp{Beskos:C2}. The transition density $\tilde{q}_\theta$ can, using \eqref{Eq:RN_ds/dw:bridges}, be expressed as
\begin{multline*}
    \tilde{q}_\theta(x, x', t) = \normpdf_t(x' - x)\exp \left( A(x', \theta) - A(x, \theta) - l(\theta) t \right) \\ \times
    \int \exp \left( - \int_0^t \phi(w_s, \theta) \, \dd s \right) \, \bb{t,x,x'}( \ud w) \eqsp,
\end{multline*}
Accordingly, we wish to calculate expectations of the form
\begin{equation}\label{eq:expectation:exp:integral}
    \int \exp \left( - \int_0^t f(w_s) \, \dd s \right) \, \bb{t, x, x'}(\ud w) \eqsp.
\end{equation}
Now assume that it is possible to simulate simultaneously a pair $(\Lower{f}, \Upper{f})$ of random variables and a trajectory $(\tilde{W}_s)_{s = 0}^t$ such that
\[
    \Lower{f} \leq f(\tilde{W}_s) \leq \Upper{f} \eqsp, \quad \mathrm{for\ all\ } s \in [0, t] \eqsp;
\]
in practice this will most often be carried through by first simulating a maximum and a minimum of the Brownian bridge process $\tilde{W}$ and hereafter interpolating, using Bessel bridges, the rest of the bridge conditionally on these. Let $\kappa$ be a discrete random variable having, conditionally on $\tilde{W}_f^{\pm}$, probability distribution $p_t( \cdot | \tilde{W}_f^{\pm})$. Then it is easily established that the GPE
\begin{equation*}
    \exp(- \Upper{f} t) \frac{t^\kappa}{\kappa! p_t(\kappa | \tilde{W}_f^{\pm})} \prod_{\ell = 1}^\kappa [ \Upper{f} - f(\path_{\psi_\ell}) ]
\end{equation*}
(associated with $p_t$) is an unbiased estimator of \eqref{eq:expectation:exp:integral}. Here $(\psi_\ell)_{\ell \geq 1}$ are mutually independent variables that are uniformly distributed over $[0, t]$ and independent of $\mathcal{F}_t$. Note that the distribution $p_t$ can be chosen freely, yielding a \emph{whole class} of GPEs, and an optimal choice is discussed by \citet{fearnhead:papaspiliopoulos:roberts:2008}. In all applications considered in this paper we will use let $\kappa$ be Poisson-distributed. 

Using the Girsanov theorem, it can be shown that
\begin{multline} \label{eq:loglik:contobs}
    \log \tilde{q}_t(x, x') =  - \frac{1}{2} \log(2 \pi t)  -\frac{(x' - x)^2}{2t}\\ + A(x', \theta) - A(x, \theta) - l(\theta) t - \int \left( \int_0^t  \phi(w_s,\theta) \, \dd s \right) \ss{x,x',t}(\dd w) \eqsp,
\end{multline}
Since the right hand side of \eqref{Eq:RN_ds/dw:bridges} can be bounded from above and below, a rejection sampler producing samples from the diffusion bridge can be constructed. This is possible as the right hand side of \eqref{Eq:RN_ds/dw:bridges} is proportional to the probability that a marked Poisson process on $[0, t] \times [0, 1]$ with intensity $r \define \sup_x \{ \phi(x) ; \Lower{\phi} < x < \Upper{\phi} \}$ is below the graph $s \mapsto \phi(\path_s; \theta)/r$. However, while observing the path for all $s$ is impossible, a finite construction can be devised by sampling the Brownian bridge at points specified by the marked Poisson process; we refer to \citet{beskos:papaspiliopoulos:roberts:fearnhead:2006} for details. The algorithm is described by the following.

\bigskip

\begin{algorithm}{XXX}{\label{alg:diff:bridge:sampling}
    \qcomment{Sampling a skeleton of a diffusion bridge}}
    simulate an outcome $(\chi_\ell,\psi_\ell)_{\ell = 1}^{\kappa}$ of the marked Poisson process with intensity $r$ and $\kappa \sim \mathrm{Po}(r)$; \\
    conditional on $\tilde{W}_{\phi}^{\pm}$, simulate $(\path_{\chi_{\ell}})_{\ell = 1}^{\kappa}$; \\
    \qif $\phi(\path_{\chi_{\ell}})/r < \psi_\ell$ \\
    \qthen \qreturn $(\path_{\chi_{\ell}})_{\ell = 1}^{\kappa}$ \\
    \qelse go to (1) \qfi
\end{algorithm}

\bigskip

By interpolating the returned skeleton $(\path_{\chi_{\ell}})_{\ell = 1}^{\kappa}$, samples $\path_{u}$, with $(\path_s)_{s = 0}^t \sim \ss{x,x',t}$, can be obtained for any $0 \leq u \leq t$. Given samples from the diffusion bridge, an unbiased estimator of \eqref{eq:loglik:contobs} can be straightforwardly constructed in the following way. Let $\psi \sim \Unif(0,t)$ be independent of $\mathcal{F}_t$. Then $- t \phi(\path_\psi,\theta)$ is an unbiased estimator of $\int ( \int_0^t  \phi(w_s,\theta) \, \dd s ) \, \ss{x,x',t}(\dd w)$ since
\begin{multline*}
    \ee \left[ t \phi(\path_\psi,\theta) \right] = \ee \left[ \ee \left[ \left. t \phi(\path_\psi,\theta) \right| \mathcal{F}_t \right] \right] \\ = \ee \int_0^t \phi(\path_s,\theta) \, \dd s  = \int \left( \int_0^t  \phi(w_s,\theta) \, \dd s \right) \, \ss{x,x',t}(\dd w) \eqsp.
\end{multline*}
Finally, plugging this estimator into \eqref{eq:loglik:contobs} yields an unbiased estimator of $\log \tilde{q}_t$.  

\bibliographystyle{apalike}
\bibliography{Reference}

\begin{thebibliography}{}

\bibitem[A\"{i}t-Sahalia, 2008]{aitsahalia:2008}
A\"{i}t-Sahalia, Y. (2008).
\newblock Closed-form likelihood expansions for multivariate diffusions.
\newblock {\em The Annals of Statistics}, 36(2):906--937.

\bibitem[Beskos et~al., 2008]{beskos:papaspiliopoulos:roberts:2008}
Beskos, A., Papaspiliopoulos, O., and Roberts, G. (2008).
\newblock A factorisation of diffusion measure and finite sample path
  constructions.
\newblock {\em Methodology and Computing in Applied Probability},
  10(1):85--104.

\bibitem[Beskos et~al., 2006]{beskos:papaspiliopoulos:roberts:fearnhead:2006}
Beskos, A., Papaspiliopoulos, O., Roberts, G., and Fearnhead, P. (2006).
\newblock Exact and computationally efficient likelihood-based estimation for
  discretely observed diffusion processes.
\newblock {\em J. R. Stat. Soc. Ser. B Stat. Methodol.}, 68(3):333--382.
\newblock With discussions and a reply by the authors.

\bibitem[Capp\'{e} et~al., 2005]{cappe:moulines:ryden:2005}
Capp\'{e}, O., Moulines, E., and Ryd\'{e}n, T. (2005).
\newblock {\em Inference in Hidden {M}arkov Models}.

\bibitem[Dempster et~al., 1977]{dempster:laird:rubin:1977}
Dempster, A., Laird, N., and Rubin, D. (1977).
\newblock Maximum likelihood from incomplete data via the {EM} algorithm.
\newblock {\em J. Roy. Statist. Soc. Ser. B}, 39:1--38.

\bibitem[Douc et~al., 2009a]{douc:fort:moulines:priouret:2007}
Douc, R., Fort, G., Moulines, E., and Priouret, P. (2009a).
\newblock Forgetting the initial distribution for hidden markov models.
\newblock {\em Stoch. Process. Appl.}, 119(4):1235--1256.

\bibitem[Douc et~al., 2009b]{douc:garivier:moulines:olsson:2009}
Douc, R., Garivier, A., Moulines, E., and Olsson, J. (2009b).
\newblock Sequential {M}onte {C}arlo smoothing for general state space hidden
  {M}arkov models.
\newblock Technical Report 2009-8, Lund University.

\bibitem[Douc and Moulines, 2008]{douc:moulines:2008}
Douc, R. and Moulines, E. (2008).
\newblock Limit theorems for weighted samples with applications to sequential
  monte carlo methods.
\newblock {\em Annals of Statistics}, 10.

\bibitem[Doucet et~al., 2000]{doucet:godsill:andrieu:2000}
Doucet, A., Godsill, S., and Andrieu, C. (2000).
\newblock On {S}equential {M}onte {C}arlo {S}ampling {M}ethods for {B}ayesian
  {F}iltering.
\newblock {\em Statistics and Computing}, 10:197--208.

\bibitem[Fearnhead et~al., 2008]{fearnhead:papaspiliopoulos:roberts:2008}
Fearnhead, P., Papaspiliopoulos, O., and Roberts, G. (2008).
\newblock Particle filters for partially observed diffusions.
\newblock {\em Journal Of The Royal Statistical Society Series B},
  70(4):755--777.

\bibitem[Fort and Moulines, 2003]{fort:moulines:2003}
Fort, G. and Moulines, E. (2003).
\newblock Convergence of the {M}onte {C}arlo expectation maximization for
  curved exponential families.
\newblock {\em Ann. Stat.}, 31(4):1220--1259.

\bibitem[Godsill et~al., 2004]{godsill:doucet:west:2004}
Godsill, S.~J., Doucet, A., and West, M. (2004).
\newblock Monte carlo smoothing for nonlinear time series.
\newblock {\em Journal of the American Statistical Association}, 99:156--168.

\bibitem[Gordon et~al., 1993]{gordon:salmond:smith:1993}
Gordon, N., Salmond, D., and Smith, A. (1993).
\newblock Novel approach to nonlinear/non-gaussian bayesian state estimation.
\newblock {\em IEE Proc. F, Radar signal Process.}, 140:107--113.

\bibitem[Handschin and Mayne, 1969]{handschin:mayne:1969}
Handschin, J. and Mayne, D. (1969).
\newblock Monte carlo techniques to estimate the conditional expectation in
  multi-stage non-linear filtering.
\newblock {\em Int. J. Control}, 9:547--559.

\bibitem[Ionides et~al., 2009]{ionides:bahdra:king:2009}
Ionides, E.~L., Bhadra, A., and King, A.~A. (2009).
\newblock Iterated filtering.
\newblock {\em arXiv:0902.0347}.

\bibitem[Kitigawa, 1998]{kitagawa:1998}
Kitigawa, G. (1998).
\newblock A self-organizing state-space-model.
\newblock {\em Journal of the American Statistical Association},
  93(443):1203--1215.

\bibitem[Kloeden and Platen, 1992]{book:kloeden:platen}
Kloeden, P.~E. and Platen, E. (1992).
\newblock {\em Numerical {S}olution of {S}tochastic {D}ifferential
  {E}quations}.
\newblock Springer-Verlag, Berlin.

\bibitem[Liu and Chen, 1995]{liu:chen:1995}
Liu, J. and Chen, R. (1995).
\newblock Blind deconvolution via sequential imputations.
\newblock {\em J. Am. Statist. Assoc.}, 90(420):567--576.

\bibitem[Olsson et~al., 2008]{olsson:cappe:douc:moulines:2008}
Olsson, J., Capp\'{e}, O., Douc, R., and Moulines, E. (2008).
\newblock Sequential monte carlo smoothing with application to parameter
  estimation in nonlinear state space models.
\newblock {\em Bernoulli}, 14(1):155--179.

\bibitem[Olsson and Ryd\'{e}n, 2008]{olsson:ryden:2005}
Olsson, J. and Ryd\'{e}n, T. (2008).
\newblock Asymptotic properties of the bootstrap particle filter maximum
  likelihood estimator for state space models.
\newblock {\em Stoch. Process. Appl.}, 118:649--680.

\bibitem[Pedersen, 1995]{pedersen:1995:theory}
Pedersen, A.~R. (1995).
\newblock Consistency and {A}symptotic {N}ormality of an {A}pproximative
  {M}aximum {L}ikelihood {E}stimator for {D}iscretely {O}bserved {D}iffusion
  {P}rocesses.
\newblock {\em Bernoulli}, 1(3):257--279.

\bibitem[Pitt and Shephard, 1999]{pitt:shephard:1999}
Pitt, M. and Shephard, N. (1999).
\newblock Filtering via simulation: Auxiliary particle filters.
\newblock {\em J. Am. Statist. Assoc.}, 87:493--499.

\bibitem[Wu, 1983]{wu:1983}
Wu, C. (1983).
\newblock On the convergence properties of the {EM} algorithm.
\newblock {\em Ann. Statist.}, 11:95--103.

\end{thebibliography}

\end{document}